\documentclass{article}

\PassOptionsToPackage{numbers,sort&compress}{natbib}
\usepackage[preprint, main]{neurips_2026}
\usepackage{enumitem}

\usepackage{amsmath,amssymb,amsthm}
\usepackage{bm}
\usepackage{algorithm}
\usepackage{algpseudocode}
\usepackage{graphicx}
\usepackage{booktabs}
\usepackage{hyperref}
\usepackage{cleveref}
\usepackage{subcaption}
\usepackage{xcolor}

\newtheorem{proposition}{Proposition}
\newtheorem{corollary}{Corollary}
\newtheorem{example}{Example}

\newtheorem{theorem}{Theorem}

\newtheorem{remark}{Remark}

\newcommand{\R}{\mathbb{R}}
\newcommand{\x}{\bm{x}}
\newcommand{\z}{\bm{z}}

\author{%
  \begin{tabular}{cc}
    \begin{tabular}{c}
      \href{https://orcid.org/0009-0008-3167-3992}{%
        \includegraphics[scale=0.06]{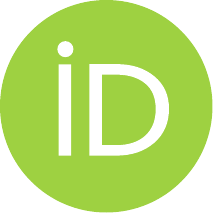}\hspace{1mm}Kevin Slote}\\
      \normalfont Clarkson Center for Complex Systems Science\\
      \normalfont Potsdam, NY 13676 \\
      \normalfont\texttt{kslote@clarkson.edu}
    \end{tabular}
    &
    \begin{tabular}{c}
      \href{https://orcid.org/0000-0001-7083-7592}{%
        \includegraphics[scale=0.06]{orcid.pdf}\hspace{1mm}Erik Bollt}\\
      \normalfont Clarkson Center for Complex Systems Science\\
      \normalfont Potsdam, NY 13676 \\
      \normalfont\texttt{embollt@clarkson.edu}
    \end{tabular}
    \\[2ex]
    \multicolumn{2}{c}{%
      \begin{tabular}{c}
        \href{https://orcid.org/0000-0002-1236-742X}{%
          \includegraphics[scale=0.06]{orcid.pdf}\hspace{1mm}Jeremie Fish}\\
        \normalfont Clarkson Center for Complex Systems Science\\
        \normalfont Potsdam, NY 13676 \\
        \normalfont\texttt{jafish@clarkson.edu}
      \end{tabular}
    }
  \end{tabular}
}

\title{Deep-Koopman-KANDy: Dictionary Discovery for Deep-Koopman Operators with Kolmogorov-Arnold Networks for Dynamics}
 
\begin{document}
\maketitle

\begin{abstract}
Symbolic library --- or Koopman dictionary --- selection is a fundamental challenge in data-driven dynamical systems. Extended Dynamic Mode Decomposition (EDMD), Sparse Identification of Nonlinear Dynamics (SINDy), and Kolmogorov--Arnold Networks for Dynamics (KANDy) all require the practitioner to commit to a function library at training time; Deep-Koopman Operators avoid this commitment but produce uninterpretable latent observables. We propose Deep-Koopman-KANDy, a structured approach to post-hoc symbolic dictionary readout that combines Deep-Koopman modeling with Kolmogorov-Arnold Networks for Dynamics (KANDy). The encoder and decoder of a Deep-Koopman Operator are replaced with two-layer Kolmogorov--Arnold Networks (KANs), and a level-set construction together with a chain-rule gradient identity exposes the compositional structure of the learned observables in a basis chosen \emph{after} training. We evaluate the method on the Lorenz system, the Chirikov standard map, the Ikeda map, and the Arnold cat map. On Lorenz it recovers the target dictionary $\{x,y,z,xy,xz\}$ with perfect recall and Jaccard score $0.79\pm0.06$; on the standard map it recovers a low-order Fourier basis matching the analytical structure; on Ikeda---which has no sparse polynomial representation---a misspecified polynomial readout still recovers the correct foliation coordinate $g\approx x^2+y^2$ together with a nontrivial outer function; and on the Arnold cat map --- used as a negative control because finite-dimensional Koopman closure is provably impossible --- the method fails to find a sparse closure, as expected.
\end{abstract}

\section*{Introduction}

Data-driven discovery of governing equations offers a compelling route to forecasting and controlling nonlinear dynamical systems, but current approaches require an unsatisfying up-front choice of candidate libraries. Methods such as Extended Dynamic Mode Decomposition (EDMD)~\cite{Williams2014, williams2016kernel}, Sparse Identification of Nonlinear Dynamics (SINDy)~\cite{sindy}, or Kolmogorov-Arnold Networks for Dynamics (KANDy)~\cite{kandy} depend on hand-crafted function libraries specified before fitting, while Deep-Koopman autoencoders avoid manual dictionary design at the cost of opaque latent observables~\cite{sindy_ae, lusch2018, li2017, otto2019}. For scientific model discovery, this is a central bottleneck. We address the timing of the library choice with Deep-Koopman-KANDy, which replaces standard encoder-decoder networks in Deep-Koopman Operators with Kolmogorov–Arnold Networks (KANs) -- allowing for post-training pruning of edge activations, a concrete advantage over multilayer perceptrons (MLPs) --- and recovers candidate structured, interpretable Koopman observables from the learned lifting that can then be read out symbolically in a user-chosen basis.
 
Given a dynamical system $\mathbf{x}_{t+1} = F(\mathbf{x}_t)$ on $\mathbb{R}^n$, the Koopman operator $\mathcal{K}$ acts on scalar observables $\varphi : \mathbb{R}^n \to \mathbb{R}$ by composition: $(\mathcal{K}\varphi)(\mathbf{x}) = \varphi(F(\mathbf{x}))$. When a finite set of observables is (approximately) closed under $\mathcal{K}$, the dynamics reduce to a finite-dimensional linear system in lifted coordinates, enabling classical techniques in numerical linear algebra for model reduction, prediction, data fusion, and control \cite{rowley2009spectral, budisic2012applied, brunton2022modern}, with applications in fluid dynamics \cite{mezic2013fluids,sharma2016correspondence}, energy modeling in buildings \cite{georgescu2015building} and oceanography \cite{giannakis2015spatiotemporal}, and molecular kinetics \cite{wu2017variational}.

\begin{figure}
    \centering
    \includegraphics[width=1.0\linewidth]{figs/Network.png}
    \caption{Deep-Koopman-KANDy architecture. A two-layer KAN encoder
    (\textbf{left}) lifts the state $\mathbf{x}$ into a structured latent
    observable $\mathbf{z}$. A stable linear generator $K$ (\textbf{center})
    advances the dynamics. A KAN decoder (\textbf{right}) inverts the lifting.
    Unlike a deep autoencoder, the KAN encoder exposes the compositional
    structure of each observable, allowing a level-set analysis of the
    higher-dimensional manifolds (\textbf{inset}) constructed from incident
    1D splines in the hidden layer.}
    \label{fig:network}
\end{figure}

\paragraph{Prior Work \& Literature Gap: Data-Driven Approximations and their Limitations.} 
Dynamic Mode Decomposition (DMD) and its extension EDMD provide practical finite-dimensional approximations of the Koopman operator~\cite{brunton2022modern, Williams2014, schmid2010dynamic}. EDMD lifts the state via a dictionary of nonlinear observables, improving upon standard DMD in many contexts; recent work further advances this by coupling EDMD with a trainable neural network dictionary~\cite{li2017}. Sparse methods such as SINDy~\cite{sindy} and KANDy~\cite{kandy} discover governing equations from practitioner-specified libraries of candidate functions. Deep-Koopman methods~\cite{lusch2018, li2017, otto2019} bypass dictionary selection by learning observables via neural-network autoencoders, but at the cost of interpretability—the latent features are unknown. A common limitation shared by all these approaches is the need for a function-class commitment: EDMD requires a handcrafted observable dictionary at training time, SINDy and KANDy require a predefined candidate library at training time, and deep autoencoders sacrifice interpretability for flexibility by deferring it indefinitely.

\paragraph{Kolmogorov--Arnold Networks.}
KANs~\cite{liu2025kan} replace each weight with a learnable univariate spline, exposing the functional form of the learned mapping at every edge. They are competitive with MLPs on regression, PDE solving, and scientific benchmarks~\cite{liu2025kan, coxkan2025, naturegraphkan2025}. KANDy~\cite{kandy} combines a single KAN layer with a SINDy-style identification framework. Because a single KAN layer represents only additive functions $g(\mathbf{x}) = \sum_i \psi_i(x_i)$, KANDy must lift cross terms (e.g.\ $xy$) into the dictionary by hand. Other recent work applies KANs to discrete chaotic maps and ordinary differential equations~\cite{Panahi2025KANModelDiscovery, Bagrow2025, Koenig2024,
Koenig2025}.

\paragraph{This work.}
Deep-Koopman-KANDy replaces the encoder and decoder of a Deep-Koopman architecture with two-layer KANs (Figure~\ref{fig:network}) and applies a level-set analysis on multilinear activation manifolds to recover compositional structure (Figure~\ref{fig:network}, inset). Two layers are required because the first KAN layer can only produce additive features; the second supplies the compositional depth needed for cross terms. The architecture decouples representation from description: the encoder learns a basis-free foliation, and the level-set machinery describes that foliation in any chosen basis after training. A wrong basis is then a descriptive failure---a denser readout of the same learned object---rather than a representational failure that requires retraining.

\paragraph{Contributions.}
\begin{itemize}[leftmargin=1.2em, itemsep=0pt, topsep=2pt]
\item A two-layer KAN encoder--decoder architecture for Deep-Koopman models with
a stably parameterized linear generator.
\item A level-set analysis of activation manifolds that recovers the
compositional decomposition $f = h \circ g$ of each learned observable.
\item Empirical demonstrations on continuous and discrete systems, including a negative-control case (Arnold cat map) where finite-dimensional spectral closure is provably impossible.
\item Evidence that misspecified readout dictionaries (e.g.\ a polynomial readout on Ikeda) degrade gracefully---$g$ becomes denser, $h$ absorbs the residual---rather than failing catastrophically.
\end{itemize}
The aim is not to replace SINDy, KANDy, or EDMD downstream, but to learn the
candidate observable structure that informs the library those methods use.

\section*{Background}
\label{sec:background}

\begin{figure}[!htp]
    \centering
    \includegraphics[width=1.0\linewidth]{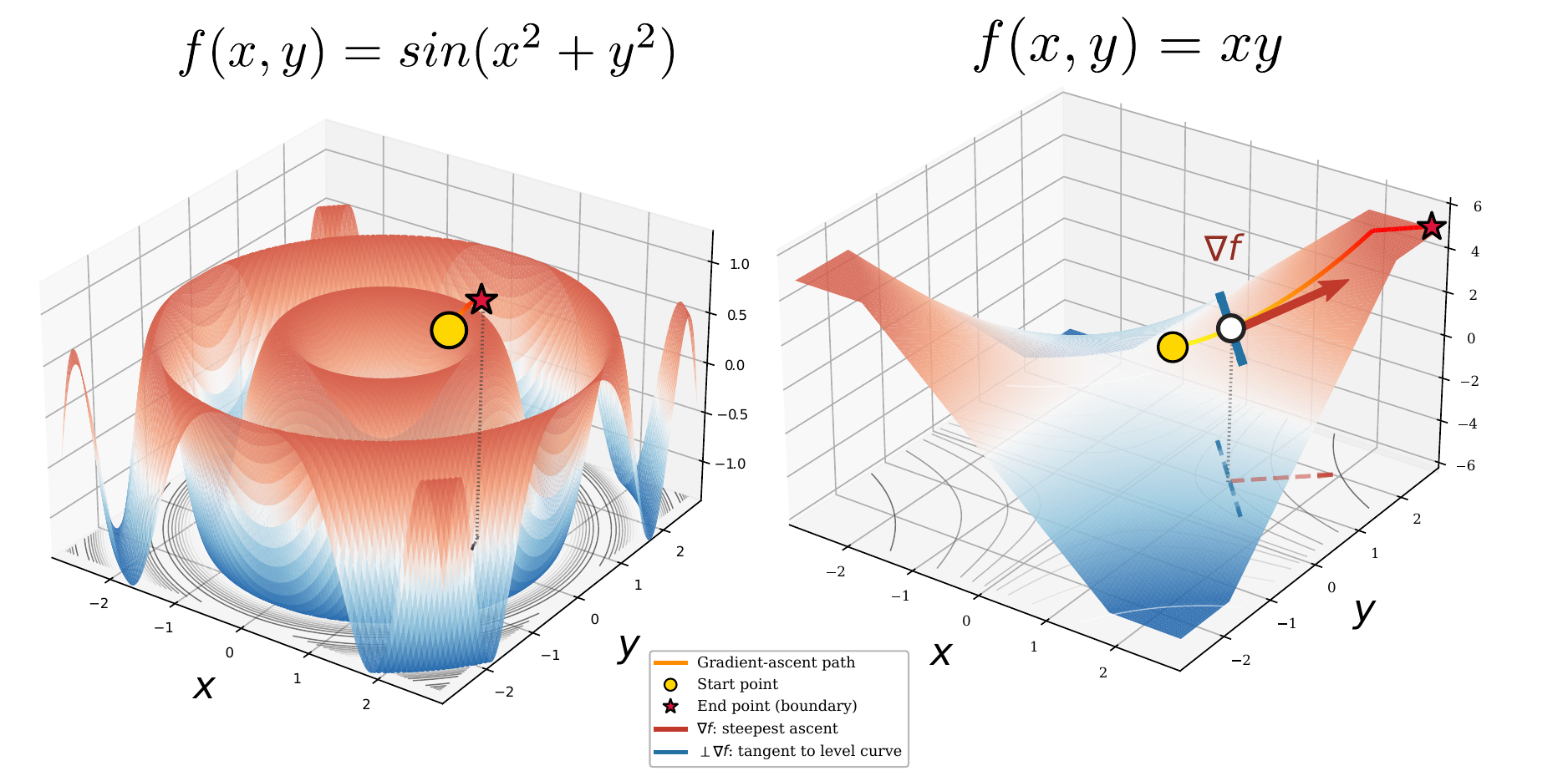}
    \caption{The hidden activation layer of the KAN defines a manifold in latent
    space. The level sets of the learned coordinate functions induce a foliation
    that we exploit to recover compositional structure.}
    \label{fig:manifold}
\end{figure}

\paragraph{Koopman operators.}
For a continuous-time system $\dot{\x}=f(\x)$, the Koopman generator
$\mathcal{A}$ acts on observables by
\begin{equation}
  \mathcal{A}\varphi = \nabla\varphi \cdot f(\x),
  \label{eq:koopman-generator}
\end{equation}
so eigenfunctions $\varphi_\lambda$ with $\mathcal{A}\varphi_\lambda =
\lambda\varphi_\lambda$ evolve as $\varphi_\lambda(\Phi^t(\x))=e^{\lambda
t}\varphi_\lambda(\x)$, where $\Phi^t$ is the flow of $f$. A Deep-Koopman
model~\cite{lusch2018} learns an encoder $\mathcal{E}\colon\R^n\to\R^N$, a
decoder $\mathcal{D}\colon\R^N\to\R^n$, and a matrix $K\in\R^{N\times N}$ such
that
\begin{equation}
  \z_t = \mathcal{E}(\x_t),\qquad
  \z_{t+\Delta t} \approx e^{K\Delta t}\z_t,\qquad
  \x_{t+\Delta t} \approx \mathcal{D}(\z_{t+\Delta t}).
  \label{eq:deep-koopman}
\end{equation}
We parameterize the generator as $K = \Omega - L^\top L$ with $\Omega$
skew-symmetric and $L$ lower triangular. This forces
$\mathrm{Re}(\sigma(K))\le 0$ and hence
$|\lambda(\exp(K\Delta t))|\le 1$, ensuring stable rollouts without spectral
penalties or projection steps.

\begin{theorem}[Kolmogorov--Arnold, 1957]
Every continuous $f:[0,1]^n\to\R$ admits the representation
\begin{equation}
  f(\x) = \sum_{q=0}^{2n}\Phi_q\!\left(\sum_{p=1}^n \psi_{q,p}(x_p)\right),
  \label{eq:KAT}
\end{equation}
with continuous univariate $\psi_{q,p},\Phi_q$.
\end{theorem}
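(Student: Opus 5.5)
The plan is the standard Kahane--Lorentz--Sprecher route: construct the inner functions once, independent of $f$, and then obtain the outer functions by an iterative approximation. Concretely, I will build continuous increasing $\phi_q:[0,1]\to[0,1]$, $q=0,\dots,2n$, together with rationally independent constants $\lambda_1,\dots,\lambda_n>0$, $\sum_p\lambda_p\le 1$. I will then set $\psi_{q,p}=\lambda_p\phi_q$, so that each inner sum becomes $\xi_q(\x)=\sum_p\lambda_p\phi_q(x_p)$.

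The key property to secure is the following covering statement. For each $k$ there is a family of $2n+1$ shifted grids of closed cubes of side $\sim 10^{-k}$, the $q$-th grid being the $q$-th shift. Every point $\x\in[0,1]^n$ should lie in cubes of at least $n+1$ of the $2n+1$ grids. In addition, on each cube $S$ of the $q$-th grid, the image $\xi_q(S)$ should be a small interval, and these intervals should be pairwise disjoint across the different cubes of that grid. I will construct the $\phi_q$ as uniform limits of piecewise-linear increasing functions, refining at each stage. On the gaps between cubes I allow steep increases; on the cubes themselves $\phi_q$ is nearly constant. Rational independence of the $\lambda_p$ is what allows the sums $\sum_p\lambda_p\phi_q(x_p)$ to separate distinct cubes. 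I expect this inductive construction, keeping monotonicity, disjointness of images, and the $n+1$-out-of-$2n+1$ covering all simultaneously at every scale, to be the main obstacle. I would follow Lorentz's book almost verbatim here rather than attempt anything new.

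Given the inner functions, the outer functions come from a contraction-type lemma. Claim: for every continuous $g$ on $[0,1]^n$ there exist continuous $\chi_q$ with $\|\chi_q\|_\infty\le \frac{1}{n+1}\|g\|_\infty$ and
\[
\Bigl\|g-\sum_{q=0}^{2n}\chi_q\circ\xi_q\Bigr\|_\infty\le \theta\|g\|_\infty,\qquad \theta=\tfrac{2n+1}{2n+2}+\varepsilon<1.
\]
To prove it, I choose $k$ so that $g$ oscillates by less than $\varepsilon\|g\|_\infty$ on each cube of scale $10^{-k}$. On the image interval $\xi_q(S)$ of each cube $S$ I set $\chi_q$ equal to $\frac{1}{n+1}g(\text{center of }S)$, and I interpolate linearly between these disjoint intervals. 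At any $\x$, at least $n+1$ of the $q$ give contributions within $\frac{\varepsilon}{n+1}\|g\|_\infty$ of $\frac{1}{n+1}g(\x)$. The at most $n$ remaining terms each have size at most $\frac{1}{n+1}\|g\|_\infty$, which gives the bound.

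Finally, I iterate. Set $g_0=f$, apply the lemma to $g_r$ to obtain $\chi_q^{(r)}$, and put $g_{r+1}=g_r-\sum_q\chi_q^{(r)}\circ\xi_q$. Then $\|g_r\|\le\theta^r\|f\|$, so the series $\Phi_q=\sum_r\chi_q^{(r)}$ converges uniformly to continuous functions. Telescoping gives $f=\sum_q\Phi_q\circ\xi_q$, which is the representation \eqref{eq:KAT} with $\psi_{q,p}=\lambda_p\phi_q$.
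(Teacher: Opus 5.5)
The paper gives no proof of this statement. It quotes the Kolmogorov--Arnold theorem as classical background, only to motivate the two-layer KAN architecture, so there is no in-paper argument to compare against.

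Your outline is the standard Kahane--Lorentz--Sprecher proof, and its logic is sound. It yields more than is stated: inner functions of the form $\lambda_p\phi_q$ that do not depend on $f$.
\begin{itemize}
\item The $n+1$-out-of-$2n+1$ covering holds because each coordinate $x_p$ lies in a gap of at most one of the $2n+1$ one-dimensional families, so at most $n$ families miss $\x$.
\item Disjointness of the images $\xi_q(S)$ within a single family is exactly what makes $\chi_q$ well defined. You do not need disjointness across different $q$, because your outer functions depend on $q$.
\item The bound $\|\chi_q^{(r)}\|_\infty\le\theta^r\|f\|_\infty/(n+1)$ gives uniform convergence of $\Phi_q=\sum_r\chi_q^{(r)}$ and the telescoping identity.
\end{itemize}
Essentially all of the difficulty of the theorem lies in the inner-function construction you defer to Lorentz. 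As written, this is a correct roadmap rather than a self-contained proof.

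Two details should be tightened.

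\emph{The overshoot case in the contraction step.} You only account for exactly $n+1$ good indices. When $m>n+1$ grids contain $\x$, the good terms overshoot $g(\x)$ by $\frac{m-n-1}{n+1}|g(\x)|$. Combining this with the $2n+1-m$ remaining terms gives
\[
  \Bigl|g(\x)-\sum_{q=0}^{2n}\chi_q\bigl(\xi_q(\x)\bigr)\Bigr|
  \le\Bigl(\frac{n}{n+1}+\frac{m\,\varepsilon}{n+1}\Bigr)\|g\|_\infty ,
  \qquad n+1\le m\le 2n+1 .
\]
So your $\theta=\frac{2n+1}{2n+2}+\varepsilon$ is valid for small $\varepsilon$ (for instance $\varepsilon\le\frac{1}{2n}$). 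The argument actually gives the sharper $\frac{n}{n+1}+O(\varepsilon)$.

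\emph{The mesh scale.} The mesh cannot literally be decimal for every $n$. The gaps of the $2n+1$ shifted families must be pairwise disjoint within each period, so the gap-to-period ratio must be below $1/(2n+1)$. With gaps of length $10^{-k-1}$ this is impossible once $n\ge 5$. For general $n$, use a base $\gamma\ge 2n+2$, as in Sprecher's version.
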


\paragraph{KANDy.}
KANDy~\cite{kandy} uses a hand-crafted SINDy-style library or a learned
Koopman-lifted dictionary $\Theta$. A 1-layer KAN feature
$\hat{f_\theta}(\Theta) = \mathrm{KAN}_\theta(\Theta) \approx d\x/dt$ together
with $\Theta(\x) = [\theta_1(\x),\dots,\theta_N(\x)]$ supports a sparse
dynamical model. Single-layer KANs cannot emit multiplicative cross terms
without an algebraic workaround: producing $xy$ requires expanding
$(x+y)^2 = x^2 + 2xy + y^2$ and cancelling the squares, which is the
``quadratic obstruction'' identified in~\cite{kandy}. Two layers resolve this:
the inner $\{\psi_{q,p}\}$ provide additive features and the outer $\{\Phi_q\}$
provide the cross-term compositions.

\section*{Methods}

\paragraph{Architecture.}
The encoder maps $\x\in\R^n$ to $\z\in\R^d$ in two KAN layers, with $m$
intermediate units. We denote the architecture $[n,m,d]$. The first layer
computes
\begin{equation}
  u_j = \sum_{i=1}^{n}\phi^{(1)}_{j,i}(x_i),\qquad j=1,\dots,m,
  \label{eq:layer1}
\end{equation}
where each edge activation is
\begin{equation}
  \phi_{j,i}(x_i) = c_{j,i}\,\mathrm{SiLU}(x_i)
    + \sum_{k=1}^{G+S} w_{j,i,k}\,B_k(x_i),
  \label{eq:kanlinear}
\end{equation}
with $\{B_k\}$ a set of order-$S$ B-splines on a grid of size $G$, and
$\mathrm{SiLU}(x) = x\sigma(x)$ a smooth residual. The B-spline basis is the
default; Chebyshev polynomials, Fourier features, or radial basis functions are
drop-in replacements. The second layer maps $\R^m\to\R^d$:
\begin{equation}
  z_k = \sum_{j=1}^{m}\phi^{(2)}_{k,j}(u_j)
      = \sum_j \Phi_{k,j}\!\left(\sum_i \psi_{j,i}(x_i)\right),\qquad
      k=1,\dots,d.
  \label{eq:layer2}
\end{equation}
By the Kolmogorov--Arnold theorem each $z_k$ approximates a broad class of
continuous functions of $(x_1,\dots,x_n)$. The width $m$ controls compositional
richness; $d$ controls dictionary size.

\paragraph{Stable propagator.}
The Koopman generator is parameterized as $K = \Omega - L^\top L$ with $\Omega$
skew-symmetric and $L^\top L\succeq 0$, guaranteeing $|\lambda|\le 1$ for all
discrete-time eigenvalues. One-step propagation uses
\begin{equation}
  \z_{t+\Delta t} = \exp(K\,\Delta t)\,\z_t.
  \label{eq:koopman-prop}
\end{equation}

\paragraph{Training objective.}
For consecutive pairs $(\x_t,\x_{t+1})$ the loss is
\begin{equation}
  \mathcal{L} = \gamma\,\|\hat{\x}_{t+1}-\x_{t+1}\|^2
              + \alpha\,\|\hat{\z}_{t+1}-\mathcal{E}(\x_{t+1})\|^2
              + \eta\,\|\hat{\x}_t-\x_t\|^2,
  \label{eq:loss}
\end{equation}
with $\hat{\z}_{t+1} = \exp(K\Delta t)\z_t$,
$\hat{\x}_{t+1}=\mathcal{D}(\hat{\z}_{t+1})$, and
$\hat{\x}_t = \mathcal{D}(\mathcal{E}(\x_t))$. The three terms enforce
prediction accuracy ($\gamma$), latent consistency ($\alpha$), and autoencoder
fidelity ($\eta$). We use $\gamma=\alpha=1$, $\eta=0.5$ throughout, which
downweights reconstruction so the encoder prioritizes predictive coordinates.

\paragraph{Pruning.}
After training, edges with normalized importance below a threshold $\tau$ are
removed from the encoder and decoder, and the model is retrained briefly at a
reduced learning rate. This prunes spurious spline contributions and
substantially improves dictionary parsimony in the level-set decomposition.
We choose $\tau=0.03$ from a sweep on Lorenz (Appendix
Table~\ref{tab:prune-sweep}); higher thresholds degrade accuracy, lower
thresholds leave too many false positives.

\paragraph{Level-set decomposition.}
\label{sec:levelset}
Each latent coordinate is a scalar function $z_k = f(\x)$. We decompose it as
$f \approx h\!\circ\!g$, where $g$ is a sparse inner function in a chosen
post-hoc basis and $h$ is a univariate outer function.

The inner $g$ is recovered by Lasso on a design matrix $\Theta\in\R^{N\times P}$
with columns drawn from any chosen basis (monomials of total degree $\le D$
in our experiments, but trigonometric or RBF columns are equally admissible):
\begin{equation}
  \min_{\bm{a}}\;\frac{1}{2N}\|\Theta_s\bm{a}-f_s\|_2^2 + \lambda\|\bm{a}\|_1.
  \label{eq:lasso}
\end{equation}
Standardization (subscript $s$) ensures the regularizer treats columns
uniformly.

\begin{proposition}[Outer derivative formula]\label{prop:hprime}
If $f(\x)=h(g(\x))$ and $\nabla g(\x)\ne\bm{0}$, then
\begin{equation}
  h'\!\bigl(g(\x)\bigr)
    = \frac{\nabla f(\x)\cdot\nabla g(\x)}{\|\nabla g(\x)\|^2}.
  \label{eq:hprime}
\end{equation}
\end{proposition}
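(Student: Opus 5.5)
The plan is to apply the multivariable chain rule and then solve the resulting vector identity for the scalar $h'(g(\x))$ by projecting onto $\nabla g(\x)$. The statement tacitly assumes that $g$ is differentiable at $\x$ and that $h$ is differentiable at the point $g(\x)$. I will make these assumptions explicit, since without them the formula has no meaning. Under these hypotheses $f = h\circ g$ is differentiable at $\x$.

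The first step is the chain rule, which gives
\begin{equation*}
  \nabla f(\x) = h'\bigl(g(\x)\bigr)\,\nabla g(\x).
\end{equation*}
So $\nabla f(\x)$ is a scalar multiple of $\nabla g(\x)$. Geometrically, $f$ is constant on the level sets of $g$, so its gradient is normal to them.

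The second step takes the Euclidean inner product of both sides with $\nabla g(\x)$:
\begin{equation*}
  \nabla f(\x)\cdot\nabla g(\x) = h'\bigl(g(\x)\bigr)\,\|\nabla g(\x)\|^2.
\end{equation*}
The hypothesis $\nabla g(\x)\neq\bm{0}$ gives $\|\nabla g(\x)\|^2>0$. Dividing by it yields \eqref{eq:hprime}.

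There is no real obstacle. The only delicate point is the regularity assumption on $h$ at $g(\x)$, which is needed for the chain rule to apply, and I would state it explicitly.

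I would also add a remark on the choice of projection. Dotting with $\nabla g(\x)$ gives the least-squares solution of $\nabla f(\x) \approx c\,\nabla g(\x)$ over scalars $c$. Consequently, when $f$ is only approximately of the form $h\circ g$, which is the situation in the level-set decomposition where $g$ comes from Lasso, the formula still returns the best pointwise estimate of $h'$. The component of $\nabla f$ orthogonal to $\nabla g$ measures the error of the decomposition.
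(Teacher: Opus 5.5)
Your proof is correct and is essentially the paper's argument. The paper differentiates $F(s)=f(\gamma(s))$ and $G(s)=g(\gamma(s))$ along an arc-length integral curve of $\nabla g/\|\nabla g\|$ and equates $dF/ds = h'(G)\,dG/ds$, which is your identity $\nabla f = h'(g)\,\nabla g$ taken in the single direction $\nabla g/\|\nabla g\|$. Your pointwise version is slightly cleaner, since it needs only differentiability of $g$ at $\x$ and of $h$ at $g(\x)$, not the existence of integral curves of the normalized gradient field.
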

\begin{proof}
See Appendix~\ref{app:proof-hprime}.
\end{proof}

The right-hand side of \eqref{eq:hprime} is computed pointwise from samples and
binned over $g$ to recover $h$ up to integration constants (complete algorithm found in Appendix Section~\ref{sec:algorithm}).

Proposition~\ref{prop:hprime} recovers $h$ assuming $f = h\circ g$, but does
this factorization hold for the latent observables produced by training?
The relevant condition is that $\nabla f$ and $\nabla g$ be parallel at each
point---equivalent to $f$ being constant on level sets of $g$. In the ambient
space $\mathbb{R}^n$ this is restrictive: the polynomial identity
$xy = \tfrac{1}{4}(x+y)^2 - \tfrac{1}{4}(x-y)^2$ shows that $f = xy$ and
$g = (x+y)^2$ have transverse level sets, so no ambient $h$ exists. 

The dynamics, however, live on a $d$-dimensional attractor
$\mathcal{A}\subset\mathbb{R}^n$, and only the \emph{intrinsic} gradients
$\nabla^{\!\mathcal{A}} f := P_x\nabla f$ and
$\nabla^{\!\mathcal{A}} g := P_x\nabla g$---the ambient gradients projected
onto the tangent space $T_x\mathcal{A}$ via $P_x$---are dynamically
meaningful. Parallelism on $\mathcal{A}$ is much weaker than parallelism
in $\mathbb{R}^n$:

\begin{theorem}[Codimension reduction; informal]\label{thm:codim-informal}
On a $d$-dimensional attractor in $\mathbb{R}^n$, the pointwise condition
that $\nabla^{\!\mathcal{A}} f$ and $\nabla^{\!\mathcal{A}} g$ be parallel
has codimension $d-1$, versus codimension $n-1$ in the ambient space.
When $d=1$ it is automatic; in general, the obstruction is reduced by
$n-d = \mathrm{codim}\,\mathcal{A}$.
\end{theorem}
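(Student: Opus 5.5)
The plan is to reduce the statement to a dimension count on the variety of parallel pairs of vectors, applied pointwise in the tangent space $T_x\mathcal{A}$ rather than in $\R^n$. At each point $x$, the intrinsic gradients $\nabla^{\!\mathcal{A}} f(x)=P_x\nabla f(x)$ and $\nabla^{\!\mathcal{A}} g(x)=P_x\nabla g(x)$ are vectors in $T_x\mathcal{A}\cong\R^d$. Parallelism of a pair $(v,w)\in V\times V$ with $\dim V=k$ means $\mathrm{rank}[v\;w]\le 1$, equivalently $v\wedge w=0$. First I fix the interpretation of ``codimension'': generic pairs $(v,w)$ with $w\neq 0$, measured in the space of pairs, or equivalently the number of independent scalar equations $v$ must satisfy once $w$ is fixed. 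With this reading, parallelism in $\R^k$ has codimension $k-1$.

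The key algebraic lemma is the following. For fixed $w\neq 0$ in $V$, the set $\{v: v\wedge w=0\}$ is the line $\R w$, which has codimension $k-1$ in $V$. Equivalently, the determinantal variety of $k\times 2$ matrices of rank $\le 1$ has codimension $(k-1)(2-1)=k-1$, by the standard formula $(k-r)(2-r)$ with $r=1$, and it is smooth away from the zero matrix. The explicit form of the condition is the vanishing of the $2\times 2$ minors $v_iw_j-v_jw_i$. I would choose coordinates with $w_1\ne0$. Then the minors with $i=1$ already cut out the line, and the remaining minors are consequences of these. So exactly $k-1$ of the equations are independent, and their Jacobian in $v$ has full rank $k-1$.

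Next I apply the lemma twice. With $V=\R^n$ and the ambient gradients, it gives codimension $n-1$. With $V=T_x\mathcal{A}$, which has dimension $d$, and the projected gradients, it gives codimension $d-1$, provided $\nabla^{\!\mathcal{A}} g(x)\neq 0$; this is the analogue of the hypothesis of Proposition~\ref{prop:hprime}. Subtracting yields a reduction of $(n-1)-(d-1)=n-d=\mathrm{codim}\,\mathcal{A}$. When $d=1$, any two vectors in a line are parallel, so the condition holds automatically at every point where $\nabla^{\!\mathcal{A}} g\ne0$. In that case $f$ is locally a function of $g$ along the curve: on a connected arc where $g$ is strictly monotone, $h:=f\circ(g|_{\text{arc}})^{-1}$ is well defined and Proposition~\ref{prop:hprime} applies with intrinsic gradients.

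I expect the main obstacle to be justifying that the projected condition is the dynamically correct one and is genuinely weaker, rather than simply restating the problem. That requires $\mathcal{A}$ to be (at least locally) a $C^1$ embedded submanifold so that $P_x$ and $T_x\mathcal{A}$ make sense; for a fractal attractor this must be stated as a hypothesis or replaced by a local tangent-space estimate. It also requires that the projection $P_x$ not accidentally make the gradients degenerate. To handle this, I would note that $P_x:\R^n\to T_x\mathcal{A}$ is surjective, so generic ambient gradient pairs map to generic tangent pairs, and transversality (a Thom-type argument over the family of $f$) shows the zero set of the $d-1$ minors has expected codimension $d-1$ in $\mathcal{A}$. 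I would present this genericity step as the informal part of the statement and spell out the assumptions explicitly.
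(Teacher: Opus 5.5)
Your proposal is correct and follows essentially the same route as the paper's proof of Theorem~\ref{thm:codim}. Both arguments use a pointwise count showing that the rank-$\le 1$ locus of vector pairs in a $k$-dimensional space has codimension $k-1$ (your $2\times 2$ minors with $w_1\neq 0$ are exactly the paper's ``$d-1$ coinciding ratios''), applied with $k=d$ on $T_x\mathcal{A}$ and $k=n$ in $\R^n$, together with the observation that two vectors in a one-dimensional tangent line are automatically parallel.

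Your extra Thom-type transversality step, which upgrades this to a codimension-$(d-1)$ subset of $\mathcal{A}$ itself, is not needed: the paper claims only codimension in $T_x\mathcal{A}\times T_x\mathcal{A}$. If you keep that step, note that it requires at least $C^2$ regularity of $f$, $g$, and $\mathcal{A}$, rather than the paper's $C^1$ hypotheses.
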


A formal statement and proof of Theorem~\ref{thm:codim-informal} appear as Theorem~\ref{thm:codim} in Appendix~\ref{app:proof-hprime}; Example~\ref{ex:cross} below illustrates the resolution for cross terms.
A quantitative companion (Theorem~\ref{thm:residual}) bounds the residual of the ambient chain-rule estimator by $\beta = \|\nabla^{\!\perp} g\|^2/\|\nabla g\|^2$, the squared sine of the angle between $\nabla g$ and $T_x\mathcal{A}$. 
Under a uniform-orientation model, the expected residual $\mathbb{E}[\beta]=(n-d)/n$ predicts false-positive coefficient magnitudes consistent with observations across all four systems (Appendix Table~\ref{tab:predicted-residual}).

\begin{example}[The cross-term obstruction on a curve]
\label{ex:cross}
Take $f(x,y)=xy$ and $g(x,y)=(x+y)^2$ on $\mathbb{R}^2$. Ambient
gradients are $\nabla f=(y,x)$, $\nabla g = 2(x+y)(1,1)$. For
$x\neq y$, these are not parallel, so no ambient $h$ exists with
$f = h\circ g$. Now restrict to a 1-dimensional attractor
$\mathcal{A}\subset\mathbb{R}^2$ with tangent $\tau(s)=(\cos\theta(s),
\sin\theta(s))$. The intrinsic gradients are scalars
$df_x(\tau)=y\cos\theta + x\sin\theta$ and
$dg_x(\tau)=2(x+y)(\cos\theta+\sin\theta)$. By Theorem~\ref{thm:codim}(1),
the intrinsic factorization holds whenever both are nonzero, and
$$
  h'\bigl(g(x,y)\bigr) =
    \frac{y\cos\theta + x\sin\theta}{2(x+y)(\cos\theta+\sin\theta)}.
$$
The right-hand side is constant on level sets of $g$ restricted to
$\mathcal{A}$ (as required for $h$ to exist). The would-be obstruction
$\nabla f - \lambda\nabla g$ for $\lambda = h'(g)$ is nonzero in
$\mathbb{R}^2$ but lies in $N_x\mathcal{A}$ pointwise.
\end{example}


\begin{figure*}[!htp]
    \includegraphics[width=\textwidth]{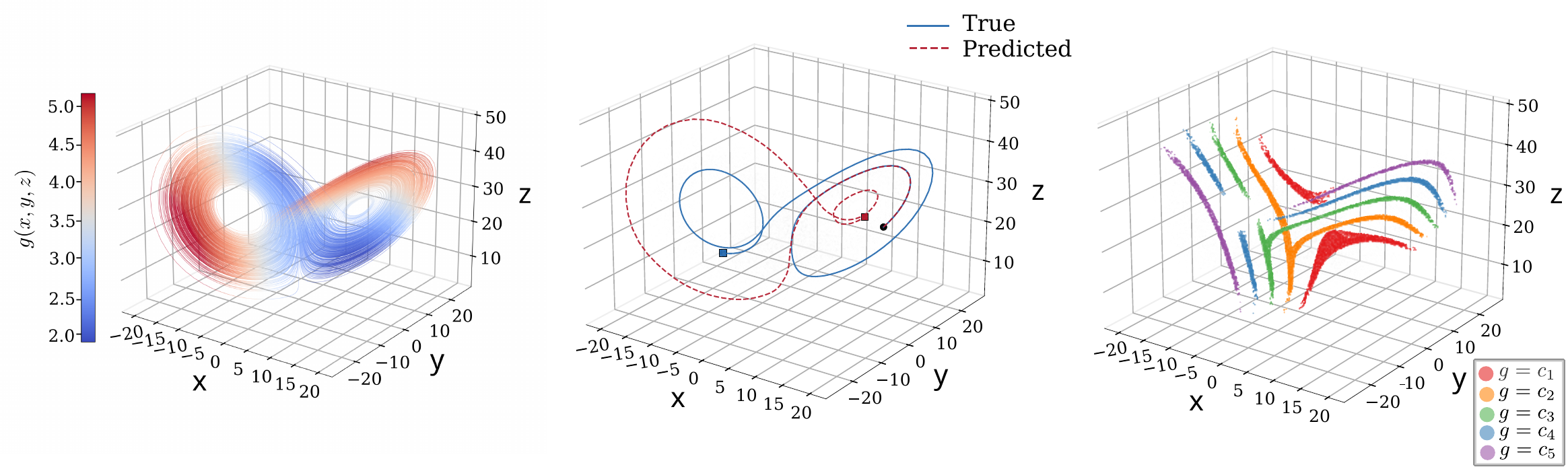}
    \caption{\textbf{Lorenz attractor and learned Koopman observable.}
    \textbf{(left)}~Attractor colored by the learned observable $g(x,y,z)$.
    \textbf{(center)}~Ground-truth and predicted trajectory over
    $\approx 2\tau_\Lambda$. \textbf{(right)}~Level-set bands $g(x,y,z)\approx
    c_i$ induce a structured foliation of the attractor.}
    \label{fig:lorenz_attractor}
\end{figure*}

\section*{Results}
\label{sec:results}

\subsection*{Lorenz system}

\begin{equation}
  \dot{x} = \sigma(y-x),\quad
  \dot{y} = x(\rho-z)-y,\quad
  \dot{z} = xy-\beta z,
  \label{eq:lorenz}
\end{equation}
with $\sigma=10$, $\rho=28$, $\beta=8/3$. The vector field is supported by the
target dictionary $\mathcal{T} = \{x,y,z,xy,xz\}$.

\textbf{Headline result.} Deep-Koopman-KANDy with architecture $[3,5,5]$
recovers $\mathcal{T}$ exactly at the union level: every term in $\mathcal{T}$
appears in at least one latent coordinate across all five seeds, with
per-seed Jaccard $0.79\pm 0.06$ and union recall $1.0$ (Table~\ref{tab:ablation-main}).
Each $z_k$ contains a subset of $\mathcal{T}$ together with residual false
positives $\{x^2,y^2,z^2\}$ at coefficients below $10^{-2}$, attributable to
polynomial-approximation residuals of the learned splines. All five latent
coordinates achieve median $R^2(h\circ g) \ge 0.92$
(Appendix Table~\ref{tab:levelset-decomposition}).

\textbf{Spectral interpretation.} The $z$-coordinate aligns with the dominant
spiral pair at $\pm 1.43$\,Hz ($r=0.94$, $R^2=0.99$). The cross terms $xy$ and
$xz$ are distributed across all five Koopman modes ($R^2=0.85$ and $0.77$),
indicating that cross-term dynamics arise from collective interference rather
than a single mode---a clean reading of mode mixing that opaque latent models
cannot provide.

Training details (60 trajectories, 6000 RK4 steps each, AdamW with learning
rate $10^{-3}$, 200 epochs followed by 100 epochs at $5\times 10^{-4}$ after
pruning at $\tau=0.03$) are listed in Appendix Table~\ref{tab:prune-sweep}. The pruning
step reduces the manifold dimensionality (the spline $\psi_{5,5}(z)$ vanishes)
and lowers false-positive rates from $\sim 4$ to $2.2$ per dimension while
leaving one-step MSE essentially unchanged ($2.6\to 2.5\times 10^{-3}$).
A degree-3 Lasso readout with $\lambda=10^{-6}$ then yields $\mathcal{T}$
exactly.\footnote{\href{https://anonymous.4open.science/r/deep-koopman-KANDy-D966}{Code,
ablation sweeps, and full model-size scans on GitHub.}}

\subsection*{Standard map}

\begin{figure}[!htpb]
    \centering
    \includegraphics[width=1.0\linewidth]{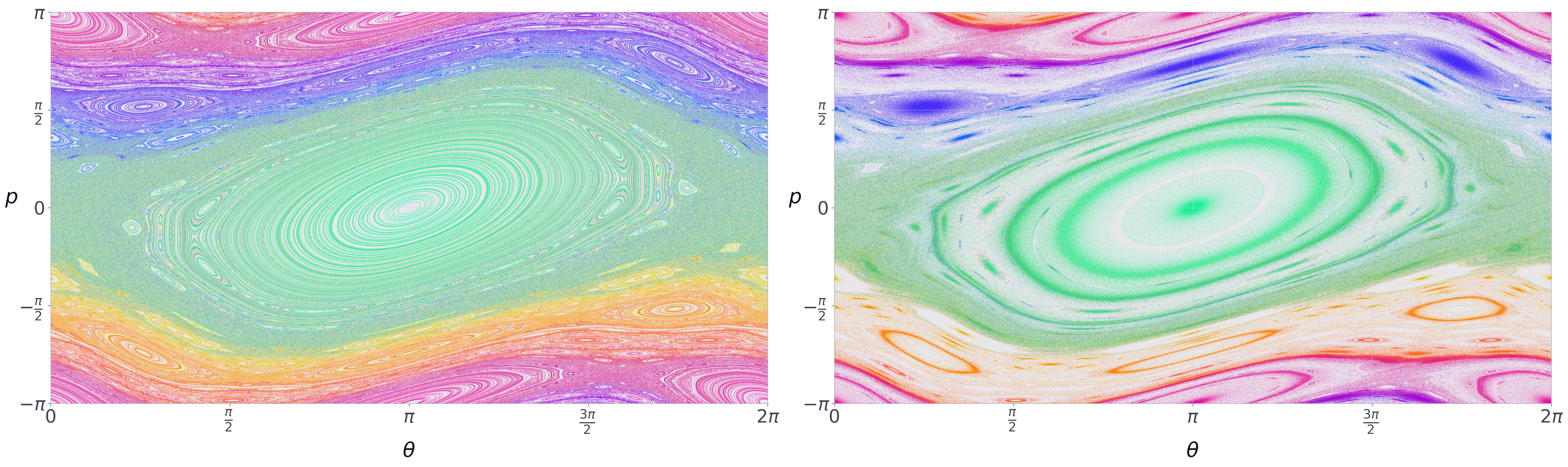}
    \caption{Deep-Koopman-KANDy on the standard map at $\kappa=0.97$.
    \textbf{Left:} ground-truth phase portrait. \textbf{Right:} predicted phase
    portrait as the encoder width $m$ increases. Larger $m$ reproduces both
    the KAM tori (invariant curves of regular motion) and the surrounding
    chaotic sea.}
    \label{fig:standard_map}
\end{figure}

We evaluate the standard map
$(\theta_{n+1},p_{n+1}) = (\theta_n+p_n,\,p_n+\kappa\sin\theta_n)\bmod 2\pi$ at
$\kappa=0.97$, the mixed regime where regular tori coexist with chaos.

\textbf{The encoder recovers the analytical Fourier basis.} The dominant learned
observables span
\begin{equation}
\left\{
\begin{aligned}
&\cos\theta,\,\sin\theta,\,\cos p,\,\sin p,\\
&\cos\theta\cos p,\,\sin\theta\sin p,
\cos\theta\sin p,\,\sin\theta\cos p,\\
&\cos\theta\sin\theta,\,\cos p\sin p
\end{aligned}
\right\},
\label{eq:standard_map_dictionary}
\end{equation}
which in Fourier language is exactly $e^{i(m\theta+np)}$ with $|m|+|n|\le 2$,
the analytical observable structure for the standard map. The recovery is
independent of the propagator parameterization: the encoder learns a
candidate basis, and the propagator separately learns its evolution.

\textbf{The stable parameterization preserves geometry.} Of $128$ latent
dimensions, $\sim 77$ have $\sigma_i\approx 1$ and the rest lie in $[0.84,
0.95]$, allocating one subset to nearly conservative dynamics and the other
to mildly contractive dynamics consistent with mixing in the chaotic sea.
Long-horizon rollouts preserve KAM island geometry while populating the
chaotic sea.

\subsection*{Ikeda map (misspecified readout)}

The Ikeda map depends on $r^2 = x^2+y^2$ through trigonometric functions of
$t(r^2) = 0.4 - 6/(1+r^2)$. A degree-3 polynomial Lasso readout is therefore
misspecified by construction. We use it anyway, treating the polynomial as a
flexible interpolant rather than a structural prior.

\textbf{Misspecification degrades parsimony, not expressivity.} Across all four
latent coordinates of a $[2,4,4]$ encoder, the level-set decomposition holds
with $R^2(h\circ g) \in [0.92, 0.99]$ (Fig.~\ref{fig:iked_minimal},
Table~\ref{tab:ikeda-levelset-decomposition}). The recovered $g$ is not sparse:
each dimension picks up several higher-order monomials as the polynomial basis
strains to approximate the non-polynomial composition. The level sets
nonetheless form coherent bands across the attractor, and the gradient identity
recovers a nontrivial $h'(g)$ rather than collapsing to an affine correction.
This is clearest for $z_2$, whose binned derivative medians exhibit the
two-peak structure tracked by the fitted outer function.

\textbf{The compositional structure is recovered.} The level-set Lasso identifies $g \approx x^2+y^2$ from matched quadratic coefficients, and the gradient identity reveals an outer function well-described by $\{\cos(t(g)),\sin(t(g))\}$ with $t(g) = 0.4 - 6/(1+g)$. The foliation coordinate is correct; the outer map must still be expressed in the trigonometric basis to obtain a closed form. This is the central decoupling claim: when the readout dictionary is wrong, the error appears as a denser $g$ and a more complex $h$, not as a representational failure---an outcome that SINDy- and KANDy-style pipelines, which commit to the dictionary at training time, cannot achieve.

\subsection*{Arnold cat map (continuous-spectrum control)}

\begin{figure}
    \centering
    \includegraphics[width=1.0\linewidth]{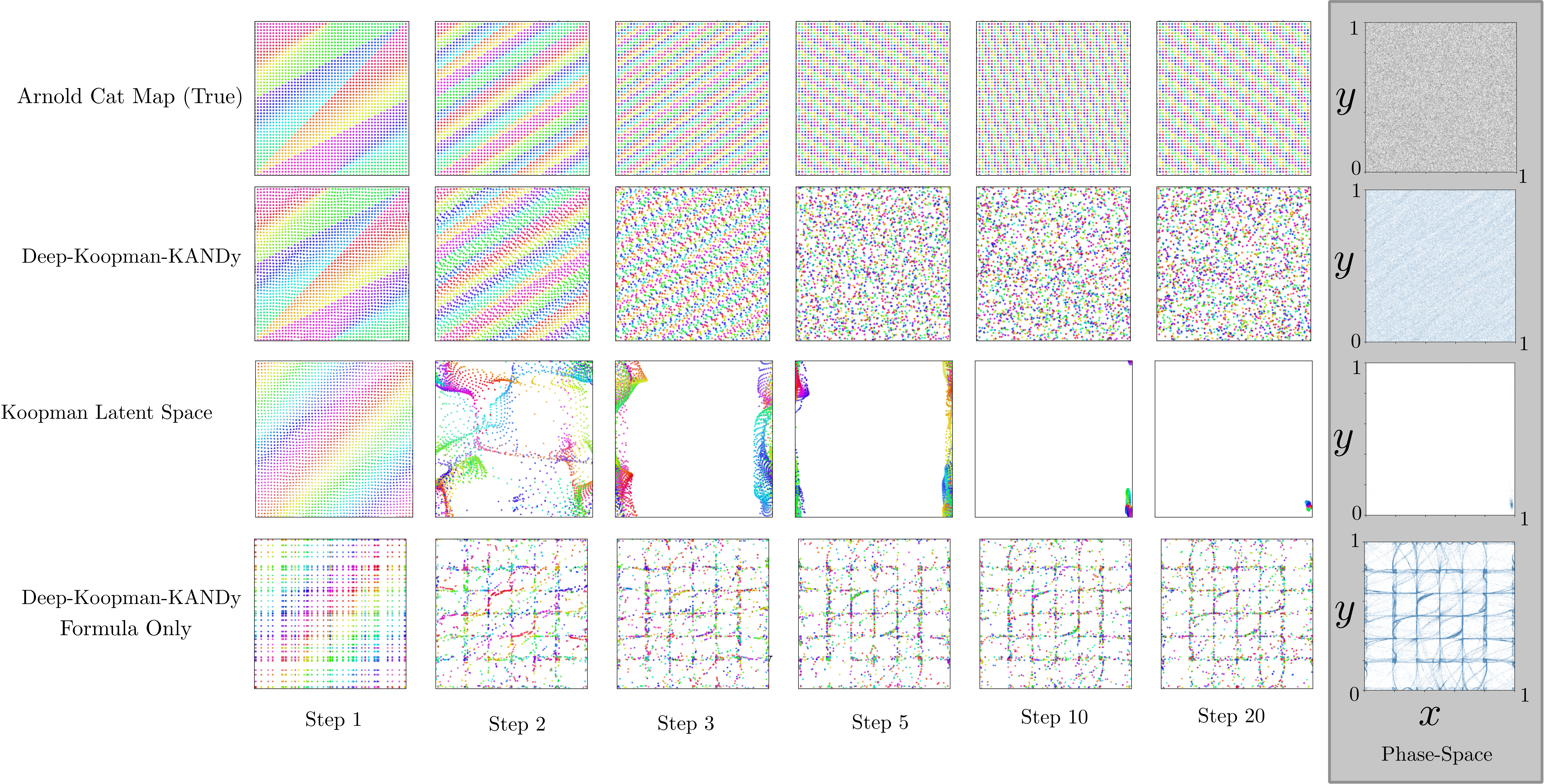}
    \caption{Arnold cat map: ground truth, full model, and two ablations.
    \textbf{Row 1:} ground-truth iterations and phase portrait.
    \textbf{Row 2:} autoregressive rollout of the full Deep-Koopman-KANDy model
    and its phase portrait. \textbf{Row 3:} latent-only rollout ($M^n$ after a
    single encode); amplitudes decay and the rollout converges to the ergodic
    mean. \textbf{Row 4:} symbolic-only rollout from the recovered Fourier
    formula ($D\circ M\circ E$ with no re-encoding); a lattice phase portrait
    emerges from Gibbs artifacts at the sawtooth discontinuity. The full model
    rolls out beyond the Lyapunov time only when the latent generator and the
    KAN encoder/decoder act together.}
    \label{fig:cat_map_rollout}
\end{figure}

The Arnold cat map on the two-torus,
\begin{equation}
T:\begin{pmatrix}x\\y\end{pmatrix}
\mapsto
\begin{pmatrix}2&1\\1&1\end{pmatrix}
\begin{pmatrix}x\\y\end{pmatrix}\bmod 1,\qquad (x,y)\in[0,1)^2,
\label{eq:cat_map}
\end{equation}
is the canonical uniformly hyperbolic, area-preserving map with purely
continuous Koopman spectrum: the constant function is the only eigenfunction.
The Lyapunov exponent $\lambda_{\max} = \log((3+\sqrt5)/2) \approx 0.962$ sets
the predictability horizon at $\tau_\Lambda \approx 1.04$ iterates. We use this
system as a negative control: a finite-dimensional Koopman model should fail, and we ask whether ours fails
honestly.

We embed $(x,y)$ as $(\cos 2\pi x,\sin 2\pi x,\cos 2\pi y,\sin 2\pi y)\in\R^4$
to obtain a smooth torus representation, train a $[4,24,1024]$ encoder for
$200$ epochs on $200$ trajectories of length $500$, and reach train and
validation losses of $6\times 10^{-5}$.

\textbf{The spectral failure.} The learned $1024\times
1024$ propagator has $|\lambda|\in[0,0.958]$ with mean $\approx 0.85$; no
eigenvalue satisfies $|\lambda|>0.96$. This contrasts with Lorenz, where many
modes lie near $|\lambda|=1$, and is the spectral signature of the absent
eigenfunction structure. As model size grows
(Table~\ref{tab:cat_model_scaling}), one-step accuracy improves by two orders
of magnitude, but the prediction horizon extends by only $\sim 2$ steps,
consistent with the Lyapunov barrier. One-step prediction is sharp
($\mathrm{MSE}_1\approx 3\times 10^{-5}$, roughly $3{,}300\times$ below
persistence), but error grows exponentially and reaches the persistence scale
by step $\sim 7$ (Appendix Table~\ref{tab:cat_prediction}).

\textbf{Three rollout regimes expose the architecture's roles.} (i)~Pure latent
rollout ($M^n$ after a single encode) collapses to a point as latent
amplitudes decay like $0.958^n$, with information effectively gone after
$\sim 62$ steps. (ii)~Symbolic rollout of the recovered Fourier formula
$f_x \approx 1/2 - \sum_{k=1}^{6}\sin(2\pi k(2x+y))/(\pi k)$ and
$f_y \approx 1/2 - \sum_{k=1}^{6}\sin(2\pi k(x+y))/(\pi k)$ matches the
analytical $1/(\pi k)$ coefficients to within $0.2\%$, but iteration produces a
lattice phase portrait from Gibbs artifacts at the sawtooth discontinuity.
(iii)~Full autoregressive rollout ($D\circ M\circ E$, re-encoding at every
step) broadly fills the torus and remains mixing-like over long horizons,
because the encoder rebuilds nonlinear information at each step rather than
propagating a fixed latent basis.

\subsection*{Ablation study}

We compare Deep-Koopman-KANDy on Lorenz against external baselines and
controlled architectural ablations: SINDy-Autoencoder (SINDy-AE), EDMD with
dictionary learning (EDMD-DL), an MLP autoencoder with stable propagator
(MLP+stable), and ablations isolating KAN depth, encoder/decoder placement,
and the stability constraint. Dictionary recovery is measured against
$\mathcal{T}=\{x,y,z,xy,xz\}$ via Jaccard similarity and recall; for SINDy-AE
and EDMD-DL we apply symbolic regression to approximate a symbolic readout.

\begin{table}[t]
\centering
\small
\setlength{\tabcolsep}{2.5pt}
\renewcommand{\arraystretch}{1.12}
\begin{tabular}{@{}lrccccc@{}}
\toprule
\textbf{Variant}
& \textbf{Params} $\downarrow$
& \textbf{Wall} $\downarrow$
& \textbf{MSE} $\downarrow$
& \textbf{NRMSE}$_{2\tau}$ $\downarrow$
& \textbf{Jaccard} $\uparrow$
& \textbf{Rec.} $\uparrow$ \\
\midrule
\multicolumn{7}{@{}l}{\textit{External baselines}} \\
EDMD-DL                       & $2{,}501$ & $3.9$h  & $2.78\times10^{-1}$ & $\mathbf{0.91 \pm 0.07}$ & $0.60 \pm 0.00$ & $0.60$ \\
SINDy-AE                      & $5{,}280$ & $11.1$h & $\mathbf{1.93\times10^{-3}}$ & $1.01 \pm 0.07$ & $0.60 \pm 0.00$ & $0.60$ \\
\addlinespace[2pt]
\multicolumn{7}{@{}l}{\textit{Architectural ablations}} \\
1L-KAN + stable               & $300$     & $32$m   & $3.28\times10^{-3}$ & $1.16 \pm 0.22$ & $0.61 \pm 0.13$ & $0.84$ \\
KAN-enc + MLP-dec             & $448$     & $33$m   & $7.63\times10^{-3}$ & $1.27 \pm 0.21$ & $0.70 \pm 0.12$ & $0.96$ \\
MLP-enc + KAN-dec             & $450$     & $36$m   & $1.25\times10^{-2}$ & $1.39 \pm 0.20$ & $0.66 \pm 0.07$ & $0.92$ \\
MLP + stable                  & $98$      & $24$m   & $3.21\times10^{-2}$ & $1.18 \pm 0.25$ & $0.62 \pm 0.13$ & $0.88$ \\
2L-KAN + unconstrained        & $800$     & $45$m   & $3.63\times10^{-3}$ & $1.35 \pm 0.29$ & $0.63 \pm 0.05$ & $\mathbf{1.00}$ \\
\addlinespace[2pt]
\multicolumn{7}{@{}l}{\textit{Ours}} \\
Deep-Koopman-KANDy ($\ell_1$) & $800$     & $46$m   & $4.61\times10^{-3}$ & $1.10 \pm 0.35$ & $0.70 \pm 0.08$ & $\mathbf{1.00}$ \\
\textbf{Deep-Koopman-KANDy (pruned)} & $800$ & $3.7$h & $2.29\times10^{-3}$ & $1.13 \pm 0.21$ & $\mathbf{0.79 \pm 0.06}$ & $\mathbf{1.00}$ \\
\bottomrule
\end{tabular}
\caption{Dictionary recovery and one-step prediction on Lorenz over $5$ seeds.
Best results in \textbf{bold}.}
\label{tab:ablation-main}
\end{table}

The pruned Deep-Koopman-KANDy obtains the best Jaccard ($0.79\pm 0.06$) at
perfect recall. EDMD-DL and SINDy-AE recover only $\{x,y,z\}$ and miss both
cross terms (Jaccard $0.60$): EDMD-DL includes states by construction, and
SINDy-AE places nonlinear structure in the coefficient matrix rather than the
lifting map. Architectural ablations cut recall to $0.84$--$0.96$, showing
that two stacked KAN layers materially improve cross-term representation.
Removing the stability constraint preserves recall but increases false
positives (Jaccard $0.63$ vs.\ $0.70$); spectral constraints improve parsimony.
The pruning threshold $\tau=0.03$ removes about $18\%$ of the $40$ encoder
edges, restores MSE within $2\%$ of baseline after $100$ epochs of retraining,
and avoids the degradation seen at $\tau=0.05$ ($1.07\times$ MSE)
(Appendix Table~\ref{tab:prune-sweep}). Across $12$ runs, $\tau=0.03$ sits near
the $14$th percentile of edge scores, indicating that pruning targets
consistently low-importance connections rather than seed-specific artifacts.

\section*{Discussion}
Deep-Koopman-KANDy decouples representation from description --- the encoder represents the dynamics by learning a foliation of state space, and the readout describes that foliation in a basis chosen after training. The two-layer KAN encoder commits to no basis at training time; the post-hoc level-set analysis names the foliation in whichever basis is supplied. A wrong basis in classical methods is a representational failure --- the dynamics cannot be expressed --- whereas here it is a descriptive failure, yielding a denser readout of the same learned object. The Ikeda result makes this concrete: a polynomial readout is provably wrong for Ikeda, yet the level-set decomposition recovers compositional structure with $R^2 > 0.92$.

\paragraph{Two failure modes, two diagnostics.}
The decoupling shifts dictionary dependence from the lifting, where miscommitment is representational, to the symbolic readout, where any flexible basis can be tried without retraining. Two distinct failure modes arise. When the readout dictionary is misspecified but the dynamics admit some finite-dimensional structure (Ikeda), the decomposition degrades gracefully: $g$ becomes denser and $h$ absorbs the compositional residual. When no finite-dimensional basis suffices at all (Arnold cat map), no readout choice recovers sparse structure, and the flat eigenvalue profile becomes the diagnostic.

\paragraph{Position in the pipeline.}
Deep-Koopman-KANDy is not a replacement for SINDy, KANDy, or EDMD as a
downstream regression method. It is a way to learn the candidate observable
structure that those methods need as input. The architecture and
$h'(g) = (\nabla f\cdot\nabla g)/\|\nabla g\|^2$ recovery procedure form a
principled route to KAN symbolic function recovery, and the pruning step
sharpens dictionary parsimony.

\paragraph{Limitations.}
The method still requires a basis for the symbolic readout, even though the choice can be deferred until after training. Removing all candidate-library dependence---learning the readout basis itself---is left to future work. The Arnold cat map result also bounds what any finite-dimensional Koopman model can do: when the spectrum is purely continuous, no architecture rescues long-horizon prediction beyond the Lyapunov time.

\bibliographystyle{unsrtnat}
\bibliography{references}

@article{koopman1931,
  author  = {Koopman, B. O.},
  title   = {Hamiltonian Systems and Transformation in Hilbert Space},
  journal = {Proceedings of the National Academy of Sciences of the United States of America},
  year    = {1931},
  volume  = {17},
  number  = {5},
  pages   = {315--318},
  doi     = {10.1073/pnas.17.5.315}
}

@article{koopman1932,
  author  = {Koopman, B. O. and von Neumann, J.},
  title   = {Dynamical Systems of Continuous Spectra},
  journal = {Proceedings of the National Academy of Sciences of the United States of America},
  volume  = {18},
  number  = {3},
  pages   = {255--263},
  year    = {1932},
  doi     = {10.1073/pnas.18.3.255}
}

@article{lusch2018,
  author  = {Lusch, Bethany and Kutz, J. Nathan and Brunton, Steven L.},
  title   = {Deep Learning for Universal Linear Embeddings of Nonlinear Dynamics},
  journal = {Nature Communications},
  year    = {2018},
  volume  = {9},
  number  = {1},
  pages   = {4950},
  doi     = {10.1038/s41467-018-07210-0}
}

@article{li2017,
  author  = {Li, Qianxiao and Dietrich, Felix and Bollt, Erik M. and Kevrekidis, Ioannis G.},
  title   = {Extended Dynamic Mode Decomposition with Dictionary Learning: A Data-Driven Adaptive Spectral Decomposition of the Koopman Operator},
  journal = {Chaos},
  year    = {2017},
  volume  = {27},
  number  = {10},
  pages   = {103111},
  doi     = {10.1063/1.4993854}
}

@article{otto2019,
  author  = {Otto, Samuel E. and Rowley, Clarence W.},
  title   = {Linearly-Recurrent Autoencoder Networks for Learning Dynamics},
  journal = {SIAM Journal on Applied Dynamical Systems},
  year    = {2019},
  volume  = {18},
  number  = {1},
  pages   = {558--593},
  doi     = {10.1137/18M1177846}
}

@article{kandy,
  author        = {Slote, Kevin and Fish, Jeremie and Bollt, Erik},
  title         = {{KANDy}: Kolmogorov--Arnold Networks and Dynamical System Discovery},
  journal       = {arXiv preprint arXiv:2602.20413},
  year          = {2026},
  doi           = {10.48550/arXiv.2602.20413},
  archivePrefix = {arXiv},
  eprint        = {2602.20413},
  primaryClass  = {math.DS}
}

@article{MoradiBollt2026,
  author        = {Moradi, Mohammadamin and Panahi, Shirin and Bollt, Erik and Lai, Ying-Cheng},
  title         = {Kolmogorov-Arnold Network Autoencoders},
  journal       = {arXiv preprint arXiv:2410.02077},
  year          = {2024},
  doi           = {10.48550/arXiv.2410.02077},
  archivePrefix = {arXiv},
  eprint        = {2410.02077},
  primaryClass  = {cs.LG}
}

@article{schmid2010dynamic,
  author    = {Peter J. Schmid},
  title     = {Dynamic mode decomposition of numerical and experimental data},
  journal   = {Journal of Fluid Mechanics},
  volume    = {656},
  pages     = {5--28},
  year      = {2010},
  doi       = {10.1017/S0022112010001217},
  publisher = {Cambridge University Press}
}

@article{Williams2014,
  author = {Williams, Matthew and Kevrekidis, Ioannis and Rowley, Clarence},
  year = {2014},
  month = {08},
  pages = {},
  title = {A Data-Driven Approximation of the Koopman Operator: Extending Dynamic Mode Decomposition},
  volume = {25},
  journal = {Journal of Nonlinear Science},
  doi = {10.1007/s00332-015-9258-5}
}

@article{brunton2022modern,
  author    = {Steven L. Brunton and Marko Budi{\v{s}}i{\'c} and Eurika Kaiser and J. Nathan Kutz},
  title     = {Modern Koopman Theory for Dynamical Systems},
  journal   = {SIAM Review},
  volume    = {64},
  number    = {2},
  pages     = {229--340},
  year      = {2022},
  month     = may,
  doi       = {10.1137/21M1401243},
  publisher = {Society for Industrial and Applied Mathematics}
}

@article{mezic2013fluids,
  author  = {Mezi{\'c}, Igor},
  title   = {Analysis of Fluid Flows via Spectral Properties of the Koopman Operator},
  journal = {Annual Review of Fluid Mechanics},
  volume  = {45},
  pages   = {357--378},
  year    = {2013},
  doi     = {10.1146/annurev-fluid-011212-140652}
}

@article{rowley2009spectral,
  author  = {Rowley, Clarence W. and Mezi{\'c}, Igor and Bagheri, Shervin and Schlatter, Philipp and Henningson, Dan S.},
  title   = {Spectral analysis of nonlinear flows},
  journal = {Journal of Fluid Mechanics},
  volume  = {641},
  pages   = {115--127},
  year    = {2009},
  doi     = {10.1017/S0022112009992059}
}

@article{williams2016kernel,
  author  = {Williams, Matthew O. and Rowley, Clarence W. and Kevrekidis, Ioannis G.},
  title   = {A kernel-based method for data-driven Koopman spectral analysis},
  journal = {Journal of Computational Dynamics},
  volume  = {2},
  number  = {2},
  pages   = {247--265},
  year    = {2015},
  doi     = {10.3934/jcd.2015005}
}

@article{budisic2012applied,
  author  = {Budi{\v{s}}i{\'c}, Marko and Mohr, Ryan and Mezi{\'c}, Igor},
  title   = {Applied Koopmanism},
  journal = {Chaos},
  year    = {2012},
  volume  = {22},
  pages   = {047510},
  doi     = {10.1063/1.4772195},
}

@article{williams2015datafusion,
  author  = {Williams, Matthew O. and Rowley, Clarence W. and Mezi{\'c}, Igor and Kevrekidis, Ioannis G.},
  title   = {Data fusion via intrinsic dynamic variables: An application of data-driven Koopman spectral analysis},
  journal = {Europhysics Letters},
  year    = {2015},
  volume  = {109},
  pages   = {40007},
  doi     = {10.1209/0295-5075/109/40007},
}

@article{brunton2016koopman,
  author  = {Brunton, Steven L. and Brunton, Bingni W. and Proctor, Joshua L. and Kutz, J. Nathan},
  title   = {Koopman invariant subspaces and finite linear representation of nonlinear dynamical systems for control},
  journal = {PLOS ONE},
  year    = {2016},
  volume  = {11},
  pages   = {e0150171},
  doi     = {10.1371/journal.pone.0150171},
}

@article{korda2017linearpredictors,
  author  = {Korda, Milan and Mezi{\'c}, Igor},
  title   = {Linear predictors for nonlinear dynamical systems: Koopman operator meets model predictive control},
  journal = {arXiv preprint arXiv:1703.10112},
  year    = {2017},
  eprint  = {1703.10112},
  archivePrefix = {arXiv},
  primaryClass  = {math.OC},
}

@article{sharma2016correspondence,
  author  = {Sharma, A. S. and Mezi{\'c}, Igor and McKeon, B. J.},
  title   = {Correspondence between Koopman mode decomposition, resolvent mode decomposition, and invariant solutions of the Navier--Stokes equations},
  journal = {Physical Review Fluids},
  year    = {2016},
  volume  = {1},
  pages   = {032402},
  doi     = {10.1103/PhysRevFluids.1.032402},
}

@article{georgescu2015building,
  author  = {Georgescu, M. and Mezi{\'c}, Igor},
  title   = {Building energy modeling: A systematic approach to zoning and model reduction using Koopman mode analysis},
  journal = {Energy and Buildings},
  year    = {2015},
  volume  = {86},
  pages   = {794--802},
  doi     = {10.1016/j.enbuild.2014.10.046},
}

@inproceedings{giannakis2015spatiotemporal,
  author    = {Giannakis, Dimitrios and Slawinska, Joanna and Zhao, Zhizhen},
  title     = {Spatiotemporal feature extraction with data-driven Koopman operators},
  booktitle = {Proceedings of the 32nd International Conference on Machine Learning},
  series    = {JMLR Workshop and Conference Proceedings},
  volume    = {44},
  pages     = {103--115},
  year      = {2015},
}

@article{wu2017variational,
  author  = {Wu, H. and N{\"u}ske, F. and Paul, F. and Klus, S. and Koltai, P. and No{\'e}, F.},
  title   = {Variational Koopman models: Slow collective variables and molecular kinetics from short off-equilibrium simulations},
  journal = {The Journal of Chemical Physics},
  year    = {2017},
  volume  = {146},
  pages   = {154104},
  doi     = {10.1063/1.4979344},
}

@article{oxfordgenomicskan2025,
	title        = {{K}olmogorov--{A}rnold networks for genomic tasks},
	author       = {Oleksandr Cherednichenko and Maria Poptsova},
	year         = 2025,
	journal      = {Bioinformatics},
	volume       = 41,
	number       = 2,
	pages        = {412--424}
}

@article{naturegraphkan2025,
	title    = {{K}olmogorov--{A}rnold graph neural networks for molecular property prediction},
    author  = {Li, Longlong and Zhang, Yipeng and Wang, Guanghui and Xia, Kelin},
    journal = {Nature Machine Intelligence},
    volume  = {7},
    number  = {8},
    pages   = {1346--1354},
    year    = {2025},
    doi     = {10.1038/s42256-025-01087-7},
    url     = {https://doi.org/10.1038/s42256-025-01087-7}
}

@article{scidirectkan2025,
	title        = {Opening the {AI} black-box: Symbolic regression with {K}olmogorov--{A}rnold Networks},
	author       = {Nataly R. Panczyk, Omer F. Erdem, Majdi I. Radaideh},
	year         = 2025,
	journal      = {Energy AI},
	volume       = 22,
	pages        = 100258
}

@article{lorenz1963deterministic,
	title        = {Deterministic Nonperiodic Flow},
	author       = {Lorenz, Edward N.},
	year         = 1963,
	journal      = {Journal of the Atmospheric Sciences},
	volume       = 20,
	number       = 2,
	pages        = {130--141},
	doi          = {10.1175/1520-0469(1963)020<0130:DNF>2.0.CO;2}
}

@article{coxkan2025,
    author = {Knottenbelt, William and McGough, William and Wray, Rebecca and Zhang, Woody Zhidong and Liu, Jiashuai and Machado, Ines Prata and Gao, Zeyu and Crispin-Ortuzar, Mireia},
    title = {Cox{KAN}: {K}olmogorov--{A}rnold networks for interpretable, high-performance survival analysis},
    journal = {Bioinformatics},
    volume = {41},
    number = {8},
    pages = {btaf413},
    year = {2025},
    month = {07},
    issn = {1367-4811},
    doi = {10.1093/bioinformatics/btaf413},
}

@article{Bakarji2023DelayAutoencoders,
  author  = {Bakarji, Joseph and Champion, Kathleen and Kutz, J. Nathan and Brunton, Steven L.},
  title   = {Discovering governing equations from partial measurements with deep delay autoencoders},
  journal = {Proceedings of the Royal Society A},
  year    = {2023},
  volume  = {479},
  number  = {2276},
  pages   = {20230422},
  month   = aug,
  doi     = {10.1098/rspa.2023.0422},
}

@article{FS1987,
  author  = {Farmer, J. D. and Sidorowich, J. J.},
  title   = {Predicting chaotic time series},
  journal = {Physical Review Letters},
  volume  = {59},
  pages   = {845},
  year    = {1987},
  doi     = {10.1103/PhysRevLett.59.845}
}

@article{CM1987,
  author  = {Crutchfield, J. P. and McNamara, B.},
  title   = {Equations of motion from a data series},
  journal = {Complex Systems},
  volume  = {1},
  pages   = {417},
  year    = {1987}
}

@article{Casdagli1989,
  author  = {Casdagli, M.},
  title   = {Nonlinear prediction of chaotic time series},
  journal = {Physica D},
  volume  = {35},
  pages   = {335},
  year    = {1989}
}

@article{SGMCPW1990,
  author  = {Sugihara, G. and Grenfell, B. and May, R. M. and Chesson, P. and Platt, H. M. and Williamson, M.},
  title   = {Distinguishing error from chaos in ecological time series},
  journal = {Philosophical Transactions of the Royal Society of London. Series B},
  volume  = {330},
  pages   = {235},
  year    = {1990}
}

@article{GS1990,
  author  = {Grassberger, P. and Schreiber, T.},
  title   = {Nonlinear time sequence analysis},
  journal = {International Journal of Bifurcation and Chaos},
  volume  = {1},
  pages   = {521},
  year    = {1990}
}

@article{Gouesbet1991,
  author  = {Gouesbet, G.},
  title   = {Reconstruction of standard and inverse vector fields equivalent to a R\"ossler system},
  journal = {Physical Review A},
  volume  = {44},
  pages   = {6264},
  year    = {1991},
  doi     = {10.1103/PhysRevA.44.6264}
}

@article{BBBB1992,
  author  = {Baake, E. and Baake, M. and Bock, H.-G. and Briggs, K. M.},
  title   = {Fitting ordinary differential equations to chaotic data},
  journal = {Physical Review A},
  volume  = {45},
  pages   = {5524},
  year    = {1992},
  doi     = {10.1103/PhysRevA.45.5524}
}

@article{Sauer1994_PRL,
  author  = {Sauer, T.},
  title   = {Reconstruction of dynamical systems from interspike intervals},
  journal = {Physical Review Letters},
  volume  = {72},
  pages   = {3811},
  year    = {1994},
  doi     = {10.1103/PhysRevLett.72.3811}
}

@article{Parlitz1996,
  author  = {Parlitz, U.},
  title   = {Estimating model parameters from time series by autosynchronization},
  journal = {Physical Review Letters},
  volume  = {76},
  pages   = {1232},
  year    = {1996}
}

@article{Szpiro1997,
  author  = {Szpiro, G. G.},
  title   = {Forecasting chaotic time series with genetic algorithms},
  journal = {Physical Review E},
  volume  = {55},
  pages   = {2557},
  year    = {1997},
  doi     = {10.1103/PhysRevE.55.2557}
}

@article{HKS1999,
  author  = {Hegger, R. and Kantz, H. and Schreiber, T.},
  title   = {Practical implementation of nonlinear time series methods: The tisean package},
  journal = {Chaos},
  volume  = {9},
  pages   = {413},
  year    = {1999}
}

@article{Bollt2000,
  author  = {Bollt, E. M.},
  title   = {Controlling chaos and the inverse {Frobenius}-{Perron} problem: Global stabilization of arbitrary invariant measures},
  journal = {International Journal of Bifurcation and Chaos},
  volume  = {10},
  pages   = {1033},
  year    = {2000}
}

@article{YB2007,
  author  = {Yao, C. and Bollt, E. M.},
  title   = {Modeling and nonlinear parameter estimation with {Kronecker} product representation for coupled oscillators and spatiotemporal systems},
  journal = {Physica D},
  volume  = {227},
  pages   = {78},
  year    = {2007}
}

@article{TZJ2007,
  author  = {Tao, C. and Zhang, Y. and Jiang, J. J.},
  title   = {Estimating system parameters from chaotic time series with synchronization optimized by a genetic algorithm},
  journal = {Physical Review E},
  volume  = {76},
  pages   = {016209},
  year    = {2007},
  doi     = {10.1103/PhysRevE.76.016209}
}

@article{WYLKG2011a,
  author = {Wang, Wen-Xu and Yang, Rui and Lai, Ying-Cheng and Kovanis, Vassilios and Grebogi, Celso},
  title   = {Predicting catastrophes in nonlinear dynamical systems by compressive sensing},
  journal = {Physical Review Letters},
  volume = {106},
  issue = {15},
  pages = {154101},
  numpages = {4},
  year = {2011},
  month = {Apr},
  publisher = {American Physical Society},
  doi = {10.1103/PhysRevLett.106.154101},
}

@article{WLGY2011b,
 title = {Network Reconstruction Based on Evolutionary-Game Data via Compressive Sensing},
  author = {Wang, Wen-Xu and Lai, Ying-Cheng and Grebogi, Celso and Ye, Jieping},
  journal = {Physical Review X},
  volume = {1},
  issue = {2},
  pages = {021021},
  numpages = {7},
  year = {2011},
  month = {Dec},
  publisher = {American Physical Society},
  doi = {10.1103/PhysRevX.1.021021},
}

@article{SWL2012b,
  title = {Detecting hidden nodes in complex networks from time series},
  author = {Su, Ri-Qi and Wang, Wen-Xu and Lai, Ying-Cheng},
  journal = {Phys. Rev. E},
  volume = {85},
  issue = {6},
  pages = {065201},
  numpages = {4},
  year = {2012},
  month = {Jun},
  publisher = {American Physical Society},
  doi = {10.1103/PhysRevE.85.065201},
}

@article{SWFDL2014,
  author  = {Shen, Z. and Wang, W.-X. and Fan, Y. and Di, Z. and Lai, Y.-C.},
  title   = {Reconstructing propagation networks with natural diversity and identifying hidden sources},
  journal = {Nature Communications},
  volume  = {5},
  pages   = {4323},
  year    = {2014}
}

@article{YLG2012,
  author  = {Rui Yang and Ying-Cheng Lai and Celso Grebogi},
  title   = {Forecasting the future: Is it possible for adiabatically time-varying nonlinear dynamical systems?},
  journal = {Chaos},
  volume  = {22},
  number  = {3},
  pages   = {033119},
  year    = {2012},
  doi     = {10.1063/1.4740057}
}

@article{Rossler1976,
  author  = {R\"ossler, O. E.},
  title   = {Equation for continuous chaos},
  journal = {Physics Letters A},
  volume  = {57},
  pages   = {397},
  year    = {1976}
}

@article{HJM1985,
  author  = {Hammel, S. M. and Jones, C. K. R. T. and Moloney, J. V.},
  title   = {Global dynamical behavior of the optical field in a ring cavity},
  journal = {Journal of the Optical Society of America B},
  volume  = {2},
  pages   = {552},
  year    = {1985}
}

@article{Holling1959a,
  author  = {Holling, C. S.},
  title   = {The components of predation as revealed by a study of small-mammal predation of the european pine sawfly},
  journal = {The Canadian Entomologist},
  volume  = {91},
  pages   = {293--320},
  year    = {1959}
}

@article{Holling1959b,
  author  = {Holling, C. S.},
  title   = {Some characteristics of simple types of predation and parasitism},
  journal = {The Canadian Entomologist},
  volume  = {91},
  pages   = {385},
  year    = {1959}
}

@article{JHSLGHL2018,
  author  = {Jiang, J. and Huang, Z.-G. and Seager, T. P. and Lin, W. and Grebogi, C. and Hastings, A. and Lai, Y.-C.},
  title   = {Predicting tipping points in mutualistic networks through dimension reduction},
  journal = {Proceedings of the National Academy of Sciences of the USA},
  volume  = {115},
  pages   = {E639},
  year    = {2018}
}

@article{Kolmogorov1957,
  author  = {Kolmogorov, A. N.},
  title   = {On the representation of continuous functions of many variables by superposition of continuous functions of one variable and addition},
  journal = {Doklady Akademii Nauk SSSR},
  volume  = {114},
  pages   = {953},
  year    = {1957}
}

@incollection{Arnold2009_Kolmogorov,
  author    = {Kolmogorov, A. N.},
  title     = {On the representation of functions of several variables as a superposition of functions of a smaller number of variables},
  booktitle = {Collected Works: Representations of Functions, Celestial Mechanics and KAM Theory, 1957--1965},
  editor    = {Givental, A. B. and Khesin, B. A. and Marsden, J. E. and Varchenko, A. N. and Vassiliev, V. A. and Viro, O. Y. and Zakalyukin, V. M.},
  publisher = {Springer Berlin Heidelberg},
  address   = {Berlin, Heidelberg},
  pages     = {25--46},
  year      = {2009},
  doi       = {10.1007/978-3-642-01742-1_5}
}

@article{Candes2006Robust,
  author    = {Emmanuel J. Cand{\`e}s and Justin Romberg and Terence Tao},
  title     = {Robust uncertainty principles: {E}xact signal reconstruction from highly incomplete frequency information},
  journal   = {IEEE Transactions on Information Theory},
  volume    = {52},
  number    = {2},
  pages     = {489--509},
  year      = {2006},
  doi       = {10.1109/TIT.2005.862083}
}

@article{Candes2006Stable,
  author    = {Emmanuel J. Cand{\`e}s and Justin Romberg and Terence Tao},
  title     = {Stable signal recovery from incomplete and inaccurate measurements},
  journal   = {Communications on Pure and Applied Mathematics},
  volume    = {59},
  number    = {8},
  pages     = {1207--1223},
  year      = {2006},
  doi       = {10.1002/cpa.20124}
}

@article{Donoho2006Compressed,
  author    = {David L. Donoho},
  title     = {Compressed sensing},
  journal   = {IEEE Transactions on Information Theory},
  volume    = {52},
  number    = {4},
  pages     = {1289--1306},
  year      = {2006},
  doi       = {10.1109/TIT.2006.871582}
}

@article{Baraniuk2007Compressed,
  author    = {Richard G. Baraniuk},
  title     = {Compressed sensing},
  journal   = {IEEE Signal Processing Magazine},
  volume    = {24},
  number    = {4},
  pages     = {118--121},
  year      = {2007},
  doi       = {10.1109/MSP.2007.4286571}
}

@article{Candes2008Introduction,
  author    = {Emmanuel J. Cand{\`e}s and Michael B. Wakin},
  title     = {An introduction to compressive sampling},
  journal   = {IEEE Signal Processing Magazine},
  volume    = {25},
  number    = {2},
  pages     = {21--30},
  year      = {2008},
  doi       = {10.1109/MSP.2007.914731}
}

@article{Bagrow2025,
doi = {10.1088/2632-2153/adf9bd},
year = {2025},
month = {aug},
publisher = {IOP Publishing},
volume = {6},
number = {3},
pages = {035037},
author = {Bagrow, James and Bongard, Josh},
title = {Multi-exit {K}olmogorov–{A}rnold networks: enhancing accuracy and parsimony},
journal = {Machine Learning: Science and Technology}
}

@article{Koenig2025,
title = {{LeanKAN}: a parameter-lean {K}olmogorov-{A}rnold network layer with improved memory efficiency and convergence behavior},
journal = {Neural Networks},
volume = {192},
pages = {107883},
year = {2025},
issn = {0893-6080},
doi = {https://doi.org/10.1016/j.neunet.2025.107883},
author = {Benjamin C. Koenig and Suyong Kim and Sili Deng}
}

@article{Koenig2024,
title = {{KAN}-{ODEs}: {K}olmogorov–{A}rnold network ordinary differential equations for learning dynamical systems and hidden physics},
journal = {Computer Methods in Applied Mechanics and Engineering},
volume = {432},
pages = {117397},
year = {2024},
issn = {0045-7825},
doi = {https://doi.org/10.1016/j.cma.2024.117397},
author = {Benjamin C. Koenig and Suyong Kim and Sili Deng}
}

@inproceedings{liu2025kan,
    title={{KAN}: Kolmogorov Arnold Networks},
    author={Ziming Liu and Yixuan Wang and Sachin Vaidya and Fabian Ruehle and James Halverson and Marin Soljacic and Thomas Y. Hou and Max Tegmark},
    booktitle={The Thirteenth International Conference on Learning Representations},
    year={2025},
    url={https://openreview.net/forum?id=Ozo7qJ5vZi}
}

@incollection{Kuramoto1975SelfEntrapment,
  author    = {Kuramoto, Yoshiki},
  title     = {Self-entrainment of a population of coupled non-linear oscillators},
  booktitle = {International Symposium on Mathematical Problems in Theoretical Physics},
  editor    = {Araki, Huzihiro},
  series    = {Lecture Notes in Physics},
  volume    = {39},
  publisher = {Springer},
  address   = {Berlin, Heidelberg},
  year      = {1975},
  doi       = {10.1007/BFb0013365}
}

@article{Basiri2025SINDyG,
  author  = {Basiri, Mohammad Amin and Khanmohammadi, Sina},
  title   = {{SINDyG}: sparse identification of nonlinear dynamical systems from graph-structured data, with applications to {S}tuart--{L}andau oscillator networks},
  journal = {Journal of Complex Networks},
  volume  = {13},
  number  = {5},
  year    = {2025},
  month   = sep,
  pages   = {cnaf029},
  doi     = {10.1093/comnet/cnaf029},
}

@article{sindy,
author = {Steven L. Brunton  and Joshua L. Proctor  and J. Nathan Kutz },
title = {Discovering governing equations from data by sparse identification of nonlinear dynamical systems},
journal = {Proceedings of the National Academy of Sciences},
volume = {113},
number = {15},
pages = {3932-3937},
year = {2016},
doi = {10.1073/pnas.1517384113},
eprint = {https://www.pnas.org/doi/pdf/10.1073/pnas.1517384113},
}

@article{Panahi2025KANModelDiscovery,
  author    = {Panahi, Shirin and Moradi, Mohammadamin and Bollt, Erik M. and Lai, Ying-Cheng},
  title     = {Data-driven model discovery with {K}olmogorov-{A}rnold networks},
  journal   = {Physical Review Research},
  volume    = {7},
  pages     = {023037},
  year      = {2025},
  month     = {April},
  doi       = {10.1103/PhysRevResearch.7.023037},
  publisher = {American Physical Society}
}

@article{sindy_ae,
author = {Kathleen Champion  and Bethany Lusch  and J. Nathan Kutz  and Steven L. Brunton },
title = {Data-driven discovery of coordinates and governing equations},
journal = {Proceedings of the National Academy of Sciences},
volume = {116},
number = {45},
pages = {22445-22451},
year = {2019},
doi = {10.1073/pnas.1906995116}
}

@article{ikeda1979,
  author  = {Ikeda, Kensuke},
  title   = {Multiple-valued stationary state and its instability of the transmitted light by a ring cavity system},
  journal = {Optics Communications},
  volume  = {30},
  number  = {2},
  pages   = {257--261},
  year    = {1979},
  doi     = {10.1016/0030-4018(79)90090-7}
}

\appendix
\section{Related Work}

\paragraph{Koopman Operator Theory and Data-Driven Approximations.} The Koopman operator~\cite{koopman1931, koopman1932} provides a linear, infinite-dimensional description of nonlinear dynamical systems and has been applied to model reduction, prediction, data fusion, and control across fluid dynamics~\cite{mezic2013fluids, sharma2016correspondence, rowley2009spectral}, energy systems~\cite{georgescu2015building}, oceanography~\cite{giannakis2015spatiotemporal}, and molecular kinetics~\cite{wu2017variational}. Dynamic Mode Decomposition (DMD)~\cite{schmid2010dynamic} offers a practical finite-dimensional Koopman approximation, with theoretical foundations surveyed in~\cite{brunton2022modern, budisic2012applied}. Extended DMD (EDMD)~\cite{Williams2014} improves accuracy by lifting the state through a hand-crafted dictionary of nonlinear observables; replacing this fixed dictionary with a trainable neural-network dictionary achieves strong reconstruction without manual feature engineering~\cite{MoradiBollt2026}. Linear Koopman predictors are studied in~\cite{korda2017linearpredictors}, and data fusion applications are explored in~\cite{williams2015datafusion, brunton2016koopman}. Deep Koopman methods~\cite{lusch2018, li2017, otto2019} learn observable functions via neural-network autoencoders, achieving flexible lifting at the cost of interpretability; delay-coordinate autoencoders~\cite{Bakarji2023DelayAutoencoders} partially address identifiability in this setting.

\paragraph{Data-Driven System Identification.} Identifying nonlinear dynamical systems from data has a long history\cite{FS1987, CM1987, Casdagli1989, SGMCPW1990, GS1990, HKS1999, Bollt2000, YB2007, WYLKG2011a, WLGY2011b, SWL2012b, SWFDL2014}, encompassing linear approximations~\cite{FS1987, Gouesbet1991, Sauer1994_PRL}, fitting of differential equations~\cite{BBBB1992}, chaotic synchronization~\cite{Parlitz1996}, genetic algorithms~\cite{Szpiro1997, TZJ2007}, the inverse Frobenius--Perron method~\cite{Bollt2000}, and least-squares approaches~\cite{YB2007}. Sparse optimization methods assume that governing equations have a sparse representation in a chosen basis~\cite{WYLKG2011a, YLG2012}, connecting naturally to compressive sensing~\cite{Candes2008Introduction, Baraniuk2007Compressed, Donoho2006Compressed, Candes2006Stable, Candes2006Robust}. SINDy~\cite{sindy} operationalizes sparse regression for continuous systems and performs well on the Lorenz~\cite{lorenz1963deterministic} and R\"{o}ssler~\cite{Rossler1976} systems. Sparsity-based methods nevertheless fail when governing equations lack a sparse polynomial structure, as in the Ikeda map~\cite{Ikeda1979, HJM1985}, Holling-type ecological models~\cite{Holling1959a, Holling1959b, JHSLGHL2018}, and the Kuramoto model~\cite{Kuramoto1975SelfEntrapment, Basiri2025SINDyG}.

\paragraph{Kolmogorov--Arnold Networks.} KANs~\cite{liu2025kan} replace fixed nonlinear activations with learnable univariate spline functions on each edge, grounded in the Kolmogorov--Arnold representation theorem~\cite{Kolmogorov1957, Arnold2009_Kolmogorov}. This design directly exposes the functional form of the learned mapping. KANs have demonstrated competitive or superior performance relative to MLPs on regression, PDE solving, genomics, and scientific machine learning tasks, often with improved interpretability and symbolic extractability~\cite{coxkan2025, oxfordgenomicskan2025, naturegraphkan2025, scidirectkan2025, liu2025kan}. 

\paragraph{KANs for Dynamical Systems.} Several recent works apply KANs to ODE and dynamical system identification. Panahi et al.~\cite{Panahi2025KANModelDiscovery} demonstrate that KANs can reconstruct attractor statistics for discrete dynamical systems, though without a principled method for recovering governing equations. Bagrow~\cite{Bagrow2025} introduces multi-exit KANs with a novel loss function for improved prediction accuracy in dynamical settings. Koenig~\cite{Koenig2024} applies symbolic regression to KAN activation weights to recover the Lotka--Volterra predator--prey model (LEAN-KAN); subsequent work~\cite{Koenig2025} refines equation estimation via an improved multiplication layer. KANDy~\cite{kandy} replaces sparse regression in SINDy with a single KAN layer, enabling symbolic extraction of univariate observables.

\section{Lorenz}
\label{app:lorenz_eigs}


The Lorenz system in Equation~\ref{eq:lorenz} is a canonical three-dimensional nonlinear system exhibiting sensitive dependence on initial conditions and a strange attractor. The Lorenz system is a common benchmark in data-driven dynamical systems \cite{sindy, kandy, sindy_ae}. It therefore provides a stringent benchmark for evaluating whether the Deep-Koopman-KANDy representation can recover the sparse nonlinear structure of the governing equations.

Table~\ref{tab:lorenz_results} summarizes the performance of the best learned model. The stable Koopman parameterization yields accurate one-step prediction and reconstruction while maintaining all learned eigenvalues inside the unit circle. Despite the intrinsic chaotic divergence of nearby trajectories, the model achieves prediction horizons on the order of one Lyapunov time, indicating that the learned latent dynamics capture the dominant short-time evolution before chaotic error growth dominates.

Beyond forecasting accuracy, the learned representation also recovers the algebraic structure of the Lorenz vector field. Using level-set decomposition (Proposition~\ref{prop:hprime}) of the encoder coordinates followed by sparse polynomial regression, every target term in the Lorenz dictionary, $\{x,y,z,xy,xz\}$, is identified in at least one latent coordinate. Table~\ref{tab:levelset-decomposition} gives the corresponding per-coordinate decomposition, showing that the dominant recovered terms are linear, while the nonlinear interaction terms appear with smaller but consistent coefficients. 
\begin{table}[ht]
      \centering
      \begin{tabular}{@{}lc@{}}
      \toprule
      Metric & Value \\
      \midrule
      Architecture & PyKAN $[3, 5, 5]$ + $K = \Omega -L^\top L$ \\
      Parameters & 1{,}593 \\
      Pruning & Edge threshold $0.03$, retrain 100 epochs \\
      \midrule
      One-step MSE & $2.55 \times 10^{-3}$ \\
      Val loss (3-term) & $0.013$ \\
      Reconstruction MSE & $5.42 \times 10^{-3}$ \\
      Horizon (NRMSE $> 0.5$) & $0.97 \pm 0.47\;\tau_\Lambda$ \\
      Horizon (NRMSE $> 1.0$) & $1.37 \pm 0.81\;\tau_\Lambda$ \\
      \midrule
      $|\lambda|$ range & $[0.854,\; 0.997]$ \\
      $\max\;\text{Re}(\mu)$ & $-0.26$ \\
      Eigenmodes & $\pm 1.43$ Hz (spiral), 3 real damping \\
      \midrule
      Dictionary (Lasso $\lambda = 1\!\times\!10^{-6}$, degree 3) & $\{x,\, y,\, z,\, xy,\, xz\}$ \\
      Union recall & $1.0$ \\
      Per-seed Jaccard (mean $\pm$ std) & $0.79 \pm 0.06$ \\
      Mean FP / dim & $2.2$ \\
      \bottomrule
      \end{tabular}
      \caption{Performance summary of the best Lorenz model: PyKAN $[3,5,5]$ with
      stable Koopman propagator, after edge pruning (threshold $0.03$) and retraining.
      Every target term in $\{x,\,y,\,z,\,xy,\,xz\}$ is recovered by at least one latent
      coordinate (union recall $= 1.0$); per-seed Jaccard is $0.79 \pm 0.06$ across 5 seeds,
      with residual false positives at coefficients $\lesssim 10^{-2}$ consistent with the
      codimension residual predicted in Appendix~\ref{app:attractor}. Recovery uses
      level-set decomposition with degree-3 polynomial inner function and Lasso
      $\lambda = 1\!\times\!10^{-6}$.
      Prediction horizon averaged over 12 validation trajectories;
      $\tau_\Lambda \approx 1.1\;\text{s}$.}
      \label{tab:lorenz_results}
  \end{table}

\begin{table}[ht]
    \centering
    \begin{tabular}{@{}clrrrrrc@{}}
    \toprule
    Dim & $R^2(h \circ g)$ & $x$ & $y$ & $z$ & $xy$ & $xz$ & FP \\
    \midrule
    $z_0$ & $0.924$ & $+0.524$ & $-0.121$ & $-0.018$ & $-0.008$ & $-0.025$ & --- \\
    $z_1$ & $0.584$ & $-0.393$ & $+0.322$ & --- & $+0.033$ & --- & $y^2,\, x^2$ \\
    $z_2$ & $0.962$ & $+0.446$ & $-0.082$ & $+0.187$ & --- & $-0.028$ & $x^2$ \\
    $z_3$ & $0.947$ & $+0.208$ & $+0.065$ & $+0.007$ & --- & $-0.009$ & $x^2$ \\
    $z_4$ & $0.826$ & $-0.234$ & $+0.032$ & $+0.246$ & $+0.028$ & --- & $x^2$ \\
    \bottomrule
    \end{tabular}
    \caption{Level-set decomposition $z_k \approx h \circ g$ for the
    Deep-Koopman-KANDy encoder (Lasso $\lambda = 10^{-5}$, degree-3
    polynomial basis). Columns show the top-5 monomial coefficients of the
    inner function~$g$; dashes indicate terms outside the top~5 or below
    $10^{-3}$. Every target term $\{x,\,y,\,z,\,xy,\,xz\}$ appears in at least one
    dimension. Cross-term coefficients are $1$--$2$ orders of magnitude below
    the linear terms but consistently recovered. FP: false-positive monomials
    in the top~5.}
    \label{tab:levelset-decomposition}
\end{table}

\clearpage
\section{Ikeda}\label{sec:ikeda}
\begin{figure}
    \centering
    \includegraphics[width=1.0\linewidth]{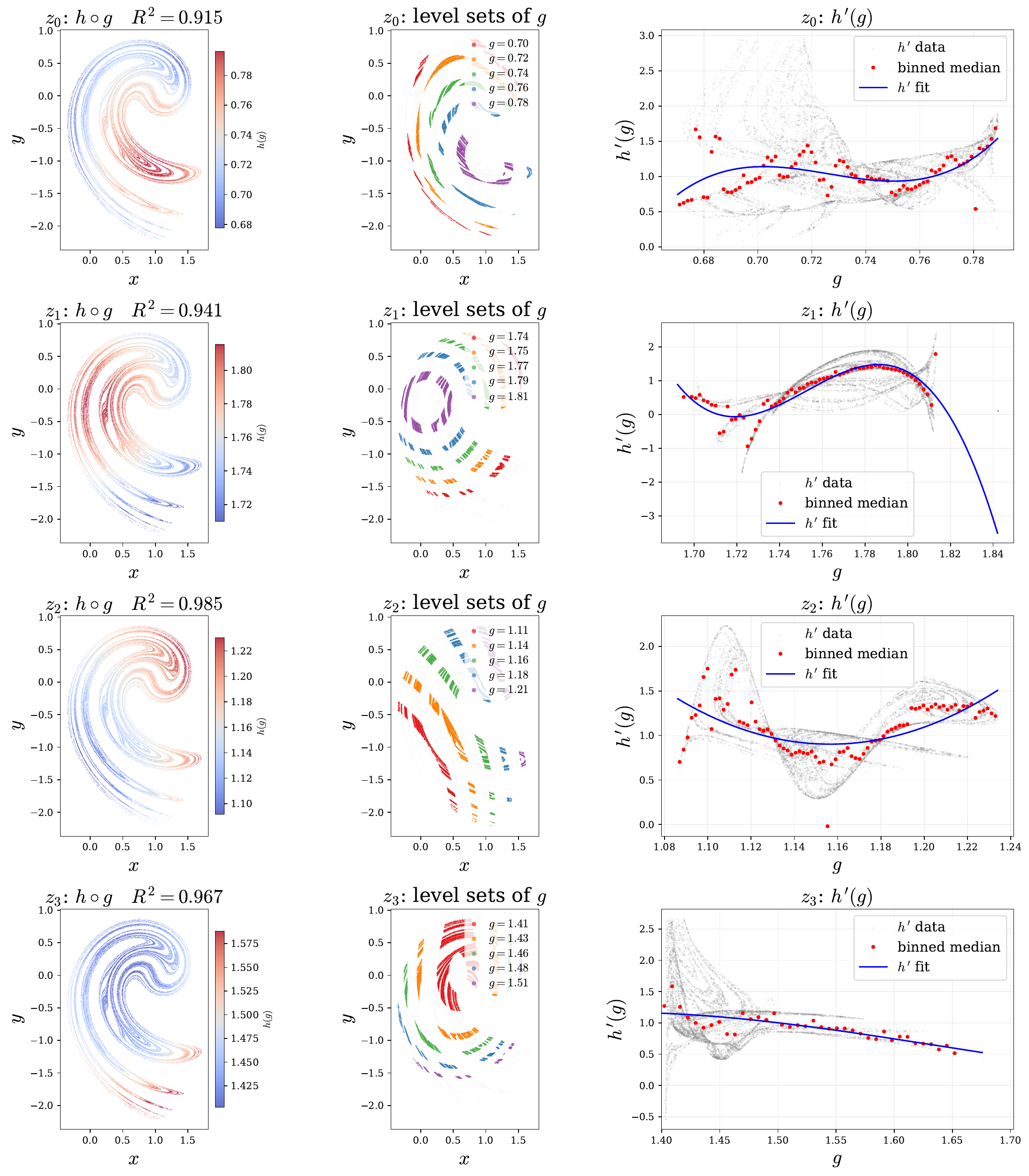}
    \caption{\textbf{Ikeda map: level-set decomposition with a misspecified polynomial readout dictionary.} The Ikeda map admits no sparse polynomial representation, so the degree-3 polynomial Lasso used to recover the inner function $g$ is misspecified by construction. We use it anyway, treating the polynomial as a flexible interpolant rather than a structural prior. \textbf{(left)} Attractor colored by the reconstruction $h(g(\mathbf{x}))$ for each of four latent dimensions $z_0,\dots,z_3$ of the trained $[2,4,4]$ encoder; $R^2 \in [0.92, 0.99]$ across all four dimensions. \textbf{(center)} Level sets of the recovered $g$ trace coherent bands across the attractor, indicating that the polynomial captures the encoder's foliation structure despite its misspecification for the underlying dynamics. \textbf{(right)} Outer derivatives $h'(g)$ recovered via the gradient identity (Eq.~\ref{eq:hprime}). All four dimensions yield nontrivial $h'$ structure --- the gradient identity recovers nonlinear outer functions from the polynomial residual rather than collapsing to affine fallback. The $z_2$ panel is the clearest case: the binned medians (red) show two-peak structure that the cubic fit (blue) tracks tightly. Together, the four dimensions demonstrate the central decoupling claim: a misspecified readout dictionary degrades parsimony, not expressivity, because compositional structure recovered through $h$ compensates for what $g$ alone cannot represent.}
    \label{fig:iked_minimal}
\end{figure}

\begin{table}[ht]
      \centering
      \begin{tabular}{@{}clrrrrrrc@{}}
      \toprule
      Dim & $R^2(h \circ g)$ & $x$ & $y$ & $x^2$ & $y^2$ & $xy$ & $h$ affine? & FP \\
      \midrule
      $z_0$ & $0.915$ & $+0.110$ & $-0.033$ & $-0.081$ & $-0.011$ & $-0.038$ & no & $y^3,\, x^3,\, x^2y$ \\
      $z_1$ & $0.941$ & $+0.012$ & $-0.020$ & $-0.076$ & $-0.050$ & $+0.014$ & yes & $x^3,\, xy^2,\, y^3,\, x^2y$ \\
      $z_2$ & $0.986$ & $+0.039$ & $+0.040$ & $+0.012$ & --- & $-0.012$ & no & $y^3$ \\
      $z_3$ & $0.968$ & $-0.096$ & --- & $+0.085$ & $+0.020$ & --- & yes & $y^3,\, x^2y$ \\
      \bottomrule
      \end{tabular}
      \caption{Level-set decomposition $z_k \approx h \circ g$ for the Ikeda $[2,4,4]$ Deep-Koopman-KANDy encoder (Lasso $\alpha = 10^{-3}$, degree-3 polynomial basis). Columns show the coefficients of the inner function~$g$ for the target monomials $\{x,\,y,\,x^2,\,y^2,\,xy\}$; dashes indicate coefficients below $10^{-3}$. The true Ikeda dictionary is compositional: $\{x,\,y,\,\cos(t(r^2)),\,\sin(t(r^2))\}$ where $t(g) = 0.4 - 6/(1+g)$ and $g = x^2 + y^2$. The degree-3 polynomial Lasso Taylor-expands the trig terms, yielding $7$--$10$ active monomials per dimension. $z_1$ shows matched $x^2/y^2$ coefficients (ratio $0.66$), consistent with the radial inner function $g = x^2 + y^2$. FP: monomials outside $\{x,\,y,\,x^2,\,y^2,\,xy\}$ active above threshold.}
      \label{tab:ikeda-levelset-decomposition}
  \end{table}

The Ikeda map provides a deliberately more challenging test of the proposed post-hoc readout procedure because its natural dictionary is not sparse in low-degree polynomials.  In particular, the map depends compositionally on the radial quantity $r^2=x^2+y^2$ through trigonometric functions of $t(r^2)=0.4-6/(1+r^2)$, so a degree-3 polynomial Lasso readout is misspecified by construction.  Nevertheless, Fig.~\ref{fig:iked_minimal} and Table~\ref{tab:ikeda-levelset-decomposition} show that the learned KAN observables still admit accurate level-set decompositions $z_k \approx h(g(\mathbf{x}))$, with $R^2(h\circ g)$ ranging from $0.915$ to $0.986$ across the four latent dimensions.  The recovered inner functions $g$ are not sparse structural dictionaries in the usual SINDy sense: each dimension contains several higher-order false-positive monomials, reflecting the polynomial basis's attempt to approximate the non-polynomial Ikeda composition.  However, the level sets of $g$ form coherent bands across the attractor, and the recovered outer derivatives $h'(g)$ are nontrivial rather than collapsing to an affine correction.  This is clearest for $z_2$, where the binned derivative medians exhibit a two-peak structure that is closely tracked by the fitted outer function, while the polynomial inner readout remains relatively parsimonious.  These results support the central decoupling claim: when the readout dictionary is misspecified, the error appears primarily as reduced parsimony and extra active monomials, not as a failure of expressivity, because the nonlinear outer map $h$ can absorb compositional structure that the polynomial inner function $g$ cannot represent directly.

\clearpage
\section{Arnold Cat Map: Extended Analysis}
\label{app:cat_map}

The Arnold--Cat map is a canonical example of a simple deterministic system with nontrivial global structure. On the two-dimensional torus
$\mathbb{T}^2 = [0,1)^2$, it is defined by the linear map
$$
T(x,y) =
\begin{pmatrix}
2 & 1 \\
1 & 1
\end{pmatrix}
\begin{pmatrix}
x \\ y
\end{pmatrix}
\mod 1,
$$
or equivalently
$$
T(x,y) = (2x+y,\; x+y) \mod 1.
$$
Although the transformation is linear before reduction modulo one, the modulo operation introduces discontinuities on the unit square. These discontinuities are responsible for the characteristic folding behavior of the map and make the cat map a useful probe of whether a learned model has captured the true torus dynamics rather than only a local linear approximation.

In our setting, the composed map
$$
f = D(ME(\cdot))
$$
represents one full learned step: encoding into the latent space, applying the learned latent dynamics, and decoding back to the torus. Since the exact cat map consists of a linear integer transformation followed by coordinatewise modulo reduction, its nonlinear component has an explicit Fourier description. In particular, the modulo-induced sawtooth terms have Fourier coefficients proportional to $1/(\pi k)$. This gives a direct spectral diagnostic: if the learned map has recovered the true torus folding mechanism, its Fourier coefficients should match the theoretical sawtooth spectrum.

We therefore evaluate $f$ on a dense $256\times256$ torus grid and decompose the resulting coordinate functions into Fourier modes up to maximum frequency $6$. This analysis tests not only pointwise accuracy of the learned dynamics, but also whether the model has learned the correct global discontinuity structure imposed by the modulo operation.

Evaluating the composed map $f = D(ME(\cdot))$ on a $256\times256$ torus grid and decomposing in a Fourier basis with maximum frequency $6$, we recover the exact sawtooth representation of the modulo map. The learned coefficients match the theoretical values $1/(\pi k)$ to within $0.2\%$:

\begin{table}[!ht]
\centering
\begin{tabular}{cccc}
\toprule
$k$ & Learned & Theory $1/(\pi k)$ & Ratio \\
\midrule
$1$ & $0.318302$ & $0.318310$ & $1.000$ \\
$2$ & $0.159137$ & $0.159155$ & $1.000$ \\
$3$ & $0.105647$ & $0.106103$ & $0.996$ \\
$4$ & $0.079498$ & $0.079577$ & $0.999$ \\
$5$ & $0.063575$ & $0.063662$ & $0.999$ \\
$6$ & $0.052957$ & $0.053052$ & $0.998$ \\
\bottomrule
\end{tabular}
\end{table}

Since $\max|\lambda|=0.958$, latent amplitudes decay as $0.958^n$. After approximately
$$
n \approx \frac{\log(0.01)}{\log(0.958)} \approx 62
$$
steps, only $1\%$ of the original amplitude remains. Empirically, the mean latent standard deviation across $500$ trajectories drops from $8.5\times10^{-3}$ at step $0$ to $8.5\times10^{-5}$ by step $62$, and to $3.8\times10^{-7}$ by step $200$. After this information death, all trajectories decode to essentially the same point.

The two-dimensional Fourier power spectrum concentrates along $(k_x,k_y)\propto(2,1)$ for $f_x$ and $(k_x,k_y)\propto(1,1)$ for $f_y$ — precisely the dynamically relevant directions of the cat-map matrix.

The recovered Fourier series is highly accurate for a single step but fails under repeated iteration due to the Gibbs phenomenon: truncating the sawtooth series introduces overshoot near discontinuity lines, and these small one-step errors are re-injected and amplified geometrically under iteration, producing the lattice-like phase portrait shown in Figure~\ref{fig:cat_map_rollout} (center).

\begin{table}[!ht]
\centering
\begin{tabular}{cccc}
\toprule
Steps & Model MSE & Persistence & Random \\
\midrule
$1$   & $3.0\times10^{-5}$ & $0.10$ & $0.167$ \\
$3$   & $1.0\times10^{-3}$ & $0.10$ & $0.167$ \\
$5$   & $5.0\times10^{-2}$ & $0.10$ & $0.167$ \\
$7+$  & $\approx0.10$       & $0.10$ & $0.167$ \\
\bottomrule
\end{tabular}
\caption{Angular MSE vs.\ prediction horizon for the Arnold cat map.}
\label{tab:cat_prediction}
\end{table}

\begin{table}[!ht]
\centering
\begin{tabular}{lcccc}
\toprule
Architecture & Parameters & Val.\ loss & $\max|\lambda|$ & Horizon \\
\midrule
$[4,16,32]$     & $12{,}544$      & $7\times10^{-3}$ & $0.985$ & $\sim2$ \\
$[4,32,64]$     & $47{,}616$      & $7\times10^{-4}$ & $0.990$ & $\sim3$ \\
$[4,24,1024]$   & $1{,}542{,}016$ & $6\times10^{-5}$ & $0.958$ & $\sim4$ \\
\bottomrule
\end{tabular}
\caption{Effect of model size on cat-map prediction. One-step accuracy improves dramatically with width, but the forecast horizon grows only marginally, consistent with the Lyapunov limit.}
\label{tab:cat_model_scaling}
\end{table}

\paragraph{Dictionary Recovery: Flat Coefficients Signal Continuous Spectrum.}
Applying level-set Lasso with a degree-3 trigonometric basis yields $R^2(h\circ g)\approx 0.98$ for all latent dimensions, with $h$ nearly affine. Unlike the Lorenz case, the coefficient structure is flat: each dimension uses nearly all 35 basis terms with similarly small magnitudes ($\sim\!4\times10^{-3}$), and no sparse basis emerges. This is the signature of continuous spectrum — smooth representability in trigonometric coordinates does not imply the existence of persistent Koopman eigenfunctions.

The Koopman matrix reveals the impossibility of finite-dimensional spectral closure; the symbolic decomposition reveals the exact one-step functional form; the full KAN composition recovers the ergodic structure that emerges when nonlinear re-encoding is permitted. 
\newpage
\section{Complete Algorithm}\label{sec:algorithm}

\begin{algorithm}[!htpb]
\caption{Deep-Koopman-KANDy: Interpretable Deep-Koopman via Two-Layer KAN Encoders}
\label{alg:deep-koopman-kandy}
\begin{algorithmic}[1]
\Require Trajectory data $\{\x_t\}$, time step $\Delta t$, latent dimension $d$
\Ensure Koopman generator $K$, observable decompositions
  $\{g_k, h_k\}_{k=1}^d$, discovered dictionary $\mathcal{T}$

\State \textbf{Phase 1: Train the Deep-Koopman model}
\State Construct two-layer KAN autoencoder $(\mathcal{E},\mathcal{D})$ as in
  \eqref{eq:layer1}--\eqref{eq:layer2}, with stable Koopman generator
  $K = \Omega - L^\top L \in \R^{d\times d}$
\State Form pair dataset $\{(\x_t,\x_{t+\Delta t})\}$
\State Minimize $\mathcal{L}$ in \eqref{eq:loss} via AdamW; prune at threshold
  $\tau$ and retrain briefly

\State
\State \textbf{Phase 2: Level-set manifold analysis}
\For{each latent coordinate $k = 1,\dots,d$}
  \State Define the scalar observable $f_k(\x) = [\mathcal{E}(\x)]_k$
  \State Compute $\nabla f_k$ via autograd on training samples
  \State Recover the inner function $g_k$ via the standardized Lasso
    \eqref{eq:lasso} on a degree-$D$ polynomial dictionary
  \State Compute $\nabla g_k$ analytically from the recovered polynomial
    coefficients
  \State Recover the outer function $h_k$ via the gradient identity
    \eqref{eq:hprime} and term-by-term integration
\EndFor

\State
\State \textbf{Phase 3: Interpretation}
\State Form the union of active supports across all $d$ coordinates
  $\to$ \emph{discovered Koopman dictionary} $\mathcal{T}$
\State Propagate Koopman dynamics $\z_{t+\Delta t} = \exp(K\Delta t)\,\z_t$
  in the basis $\mathcal{T}$
\end{algorithmic}
\end{algorithm}

Once the encoder $\mathcal{E}$ -- the two-layer KAN of
\eqref{eq:layer1}--\eqref{eq:layer2} -- is trained, we extract the Koopman
dictionary by decomposing each latent coordinate as a scalar composition
\begin{equation}\label{eq:hog}
   z_k \;=\; f_k(\x) \;=\; h_k\!\bigl(g_k(\x)\bigr),\qquad k=1,\dots,d,
\end{equation}
where $g_k:\R^n\to\R$ is a sparse inner function in a chosen post-hoc basis
(monomials of total degree $\le D$ in our experiments, with trigonometric or
RBF columns equally admissible) and $h_k:\R\to\R$ is a smooth univariate
outer function. The atoms surviving the sparse recovery of $g_k$ are the
dictionary entries the encoder discovered for coordinate~$k$; their union
across coordinates gives the full dictionary $\mathcal{T}$.

The decomposition proceeds in three steps.

\paragraph{Step 1: Inner function via Lasso.}
Let $\{\x_j\}_{j=1}^N$ be sample points on the attractor and let
$f_j = f_k(\x_j) = [\mathcal{E}(\x_j)]_k$ be the encoder output computed by
forward evaluation. We construct a polynomial design matrix
$\Theta\in\R^{N\times P}$ whose $P$ columns are all monomials
$\x^{\boldsymbol{\alpha}} = x_1^{\alpha_1}\cdots x_n^{\alpha_n}$ with
$|\boldsymbol{\alpha}|\le D$ (in practice $D = 3$). The inner function is
recovered by solving the standardized $\ell_1$-penalised regression of
\eqref{eq:lasso},
\begin{equation*}
  \hat{\bm{a}} \;=\; \arg\min_{\bm{a}}\;
    \frac{1}{2N}\bigl\|\Theta_s\,\bm{a} - \mathbf{f}_s\bigr\|_2^2
    \;+\; \lambda\,\|\bm{a}\|_1,
\end{equation*}
on column-standardized data (subscript $s$), then unscaling to obtain
\begin{equation*}
    g_k(\x)
    \;=\; \sum_{|\boldsymbol{\alpha}|\le D}
      \hat a_{\boldsymbol{\alpha}}\,\x^{\boldsymbol{\alpha}}.
\end{equation*}
Active terms
$\mathcal{S}_k = \{\boldsymbol{\alpha}:|\hat a_{\boldsymbol{\alpha}}|>\delta\}$
(with sparsity threshold $\delta = 10^{-4}$, distinct from the encoder
pruning threshold $\tau$ of the main paper) from the dictionary support for
coordinate~$k$. When $h_k$ is approximately affine---as occurs for most
Lorenz latent coordinates---$g_k$ already approximates $f_k$ and the surviving monomials directly identify the Koopman observables.

\paragraph{Step 2: Outer function via gradient identity.}
When $h_k$ is genuinely nonlinear, the Lasso fit $g_k$ alone has poor $R^2$
(e.g.\ $g\approx x^2+y^2$ captures only the inner polynomial of
$z = \sin(x^2+y^2)$). To recover $h_k$ we apply Proposition~\ref{prop:hprime}:
if $f(\x) = h(g(\x))$ and $\nabla g(\x)\ne\bm{0}$, then
$\nabla f = h'(g)\,\nabla g$, and projecting both sides onto $\nabla g$ gives
the gradient identity \eqref{eq:hprime},
\begin{equation*}
    h'\!\bigl(g(\x)\bigr)
    \;=\; \frac{\nabla f(\x)\cdot\nabla g(\x)}
              {\|\nabla g(\x)\|^2}.
\end{equation*}
This identity arises by parameterizing points by arc length $s$ along the
gradient curves of $g$ ($d\gamma/ds = \nabla g / \|\nabla g\|$), applying the
multivariate chain rule to both $f(\gamma(s))$ and $g(\gamma(s))$, and
eliminating $s$. The arc-length parameter cancels: the formula requires only
pointwise evaluation of $\nabla f$ and $\nabla g$, with no numerical
integration of trajectories.

In practice we evaluate \eqref{eq:hprime} at the $N$ sample points:
\begin{enumerate}[nosep]
    \item $\nabla f_k(\x_j)$ is computed via automatic differentiation
          through the trained encoder $\mathcal{E}$;
    \item $\nabla g_k(\x_j)$ is computed analytically from the polynomial
          coefficients $\hat{\bm{a}}$;
    \item points with $\|\nabla g_k(\x_j)\|^2$ below the 5th percentile are
          discarded (ill-conditioned denominators), and outliers in $h'_k$
          beyond $3\times\mathrm{IQR}$ are removed.
\end{enumerate}
This yields a one-dimensional dataset $\{(g_j,\,q_j)\}_{j=1}^N$ where
$g_j = g_k(\x_j)$ and $q_j \approx h'_k(g_j)$.

\paragraph{Step 3: Integration and reconstruction.}
We fit a polynomial
$\hat{h}'_k(\zeta) = \sum_{l=0}^{p} b_l\,(\zeta - \bar g_k)^l$  to the binned medians of $\{(g_j,q_j)\}$ ($80$ bins; median
per bin for robustness to residual outliers, where $\bar g_k$ denotes the
sample mean of $g_k$), then integrate term-by-term:
\begin{equation}\label{eq:h-integrate}
  \hat{h}_k(\zeta) \;=\; C + \sum_{l=0}^{p}\frac{b_l}{l+1}\,
                                     (\zeta - \bar g_k)^{l+1},
\end{equation}
where the constant $C$ is fixed by $\hat h_k\bigl(g_k(\x_0)\bigr) = f_k(\x_0)$
at a reference point $\x_0$ near the median of $g_k$. If the polynomial
reconstruction $\hat h_k\circ g_k$ achieves higher residual than a simple
affine fit $f_k\approx a\,g_k + b$, we fall back to the affine model
(indicating $h_k$ is linear and $g_k$ alone suffices).

The full reconstruction $\hat f_k(\x) = \hat h_k\bigl(g_k(\x)\bigr)$ is
evaluated on the sample set and scored by
\begin{equation*}
    R^2(h_k\circ g_k)
    \;=\; 1 \,-\, \frac{\sum_j \bigl(f_j - \hat h_k(g_j)\bigr)^2}
                       {\sum_j \bigl(f_j - \bar f_k\bigr)^2},
\end{equation*}
where $\bar f_k$ is the sample mean of $f_k$. This metric separates the
quality of the inner polynomial ($R^2(g_k)$ -- how well $g_k$ alone fits
$f_k$) from the gain contributed by the nonlinear outer function
($R^2(h_k\circ g_k) - R^2(g_k)$).

\newpage
\section{Proof of Propositions.}
\label{app:proof-hprime}

\begin{proof}
Consider curves $\gamma(s)$ parameterized by arc length satisfying
$\dot{\gamma}(s) = \nabla g(\gamma(s)) / \|\nabla g(\gamma(s))\|$.
Along such a curve, let $G(s) = g(\gamma(s))$ and $F(s) = f(\gamma(s))$.
Then:
\begin{align}
  \frac{dG}{ds} &= \nabla g \cdot \frac{\nabla g}{\|\nabla g\|}
    = \|\nabla g\|, \label{eq:dGds} \\
  \frac{dF}{ds} &= \nabla f \cdot \frac{\nabla g}{\|\nabla g\|}.
    \label{eq:dFds}
\end{align}
Since $F(s) = h(G(s))$, the chain rule gives
$dF/ds = h'(G(s))\,dG/ds = h'(G(s))\,\|\nabla g\|$.
Equating with \eqref{eq:dFds}:
\begin{equation}
  h'(g)\,\|\nabla g\| = \frac{\nabla f \cdot \nabla g}{\|\nabla g\|}
  \implies
  h'(g) = \frac{\nabla f \cdot \nabla g}{\|\nabla g\|^2}.
  \qedhere
\end{equation}
\end{proof}

\begin{proposition}[Single-pair regime for KAN level-set recovery]
\label{prop:single-pair}
Let
$$
  z_k(x) \;=\; \sum_{j=1}^m \Phi_{k,j}\bigl(u_j(x)\bigr),
  \qquad
  u_j(x) \;=\; \sum_{i=1}^n \psi_{j,i}(x_i),
$$
be a latent coordinate of the trained two-layer KAN encoder, and let
$\mathcal{A}_k := \{\, j : \Phi_{k,j} \not\equiv 0 \,\}$ denote its set of
active channels. Suppose there exist a scalar field
$g_k : \mathbb{R}^n \to \mathbb{R}$ with $\nabla g_k(x) \neq 0$ on the
support of the data, and univariate functions
$\alpha_{k,j} : \mathbb{R} \to \mathbb{R}$ for $j \in \mathcal{A}_k$,
such that
\begin{equation}
\label{eq:co-foliation}
  u_j(x) \;=\; \alpha_{k,j}\bigl(g_k(x)\bigr)
  \qquad \text{for every } j \in \mathcal{A}_k.
\end{equation}
Define
$$
  h_k(\xi) \;:=\; \sum_{j \in \mathcal{A}_k}
    \Phi_{k,j}\bigl(\alpha_{k,j}(\xi)\bigr).
$$
Then $z_k(x) = h_k\bigl(g_k(x)\bigr)$, and the gradient identity
\begin{equation}
\label{eq:grad-identity}
  h_k'\bigl(g_k(x)\bigr)
  \;=\;
  \frac{\nabla z_k(x) \cdot \nabla g_k(x)}{\|\nabla g_k(x)\|^2}
\end{equation}
recovers $h_k$ exactly, up to an additive constant, along any integral
curve of $\nabla g_k$.
\end{proposition}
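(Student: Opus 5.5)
The argument has two parts: an algebraic substitution that gives the factorization, and an appeal to Proposition~\ref{prop:hprime} for the gradient identity.

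\textbf{Factorization.} Channels $j\notin\mathcal{A}_k$ have $\Phi_{k,j}\equiv 0$, so they drop out and
$z_k(x)=\sum_{j\in\mathcal{A}_k}\Phi_{k,j}(u_j(x))$.
Substituting the co-foliation hypothesis \eqref{eq:co-foliation}, $u_j(x)=\alpha_{k,j}(g_k(x))$, into each active term gives
$z_k(x)=\sum_{j\in\mathcal{A}_k}\Phi_{k,j}(\alpha_{k,j}(g_k(x)))=h_k(g_k(x))$.
This is pure composition, so no regularity is needed for this step.

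\textbf{Gradient identity.} Proposition~\ref{prop:hprime} needs $h_k$ to be differentiable. I would impose this by assuming the $\alpha_{k,j}$ are $C^1$. The $\Phi_{k,j}$ are already $C^1$, being SiLU plus B-splines of order $S\ge 2$. Then $h_k\in C^1$, and Proposition~\ref{prop:hprime} with $f=z_k$, $g=g_k$ and $\nabla g_k\neq 0$ on the data support gives \eqref{eq:grad-identity} pointwise.

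\textbf{Recovering $h_k$ up to a constant.} Take an integral curve $\gamma(s)$ of $\nabla g_k/\|\nabla g_k\|$, as in the proof of Proposition~\ref{prop:hprime}. By \eqref{eq:dGds}, $G(s)=g_k(\gamma(s))$ is strictly increasing, so it is a valid reparametrization of the curve. The right-hand side of \eqref{eq:grad-identity} then determines $h_k'$ on the interval $G(\text{curve})$. Integrating via the fundamental theorem of calculus gives $h_k$ there up to an additive constant, which is fixed by a single evaluation $h_k(g_k(\gamma(s_0)))=z_k(\gamma(s_0))$.

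\textbf{Main obstacle.} The delicate point is regularity. The hypothesis only gives $u_j=\alpha_{k,j}\circ g_k$, and this does not by itself make $\alpha_{k,j}$ differentiable. I would try to derive it as follows. Since $\nabla g_k\neq 0$ and $g_k$ is strictly monotone along $\gamma$, locally $\alpha_{k,j}(\xi)=u_j(\gamma(G^{-1}(\xi)))$. This is a composition of $C^1$ maps, with $G^{-1}$ being $C^1$ by the inverse function theorem because $G'=\|\nabla g_k\|>0$. Hence $\alpha_{k,j}$ is $C^1$ on the range of $g_k$ along each curve, which is exactly the range where the recovery claim is made.

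If $g_k$ itself is only $C^1$ the argument still works. I would state that $g_k\in C^1$ is assumed, as is implicit in $\nabla g_k$ being defined.
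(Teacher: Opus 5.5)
Your proof is correct and follows the paper's argument. You substitute the co-foliation hypothesis to get $z_k = h_k\circ g_k$, obtain the gradient identity by the chain rule (your appeal to Proposition~\ref{prop:hprime} is the same computation the paper does directly), and integrate along integral curves of $\nabla g_k$. Your extra regularity step, writing $\alpha_{k,j} = u_j\circ\gamma\circ G^{-1}$ to show $h_k$ is $C^1$, fills in a differentiability assumption that the paper uses implicitly when it writes $\nabla z_k = h_k'(g_k)\,\nabla g_k$.
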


\begin{proof}\label{app:proof-prop}
Substituting \eqref{eq:co-foliation} into the definition of $z_k$ and
using that $\Phi_{k,j} \equiv 0$ for $j \notin \mathcal{A}_k$,
$$
  z_k(x)
  \;=\; \sum_{j \in \mathcal{A}_k} \Phi_{k,j}\bigl(u_j(x)\bigr)
  \;=\; \sum_{j \in \mathcal{A}_k}
    \Phi_{k,j}\bigl(\alpha_{k,j}(g_k(x))\bigr)
  \;=\; h_k\bigl(g_k(x)\bigr),
$$
which establishes the decomposition. Applying the chain rule,
$$
  \nabla z_k(x)
  \;=\; h_k'\bigl(g_k(x)\bigr)\,\nabla g_k(x).
$$
Taking the inner product of both sides with $\nabla g_k(x)$ and dividing
by $\|\nabla g_k(x)\|^2 > 0$ yields \eqref{eq:grad-identity}. Integration
of $h_k'$ along any integral curve of $\nabla g_k$ recovers $h_k$ up to
the constant of integration.
\end{proof}

\begin{corollary}[Sufficient conditions on the trained network]
\label{cor:regimes}
The hypothesis \eqref{eq:co-foliation} of
Proposition~\ref{prop:single-pair} holds in either of the following
regimes:
\begin{enumerate}
  \item \textbf{Channel sparsity.} The active set is a singleton,
    $\mathcal{A}_k = \{j^*\}$. Then $g_k = u_{j^*}$,
    $\alpha_{k,j^*} = \mathrm{id}$, and $h_k = \Phi_{k,j^*}$.
  \item \textbf{Co-foliation.} The active inner functions
    $\{u_j\}_{j \in \mathcal{A}_k}$ share level sets: each $u_j$ is
    constant on $\{g_k = c\}$ for every $c$ in the data range. Then
    each $u_j$ admits a univariate reparameterization
    $u_j = \alpha_{k,j} \circ g_k$.
\end{enumerate}
\end{corollary}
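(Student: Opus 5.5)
Both regimes are handled by exhibiting, for each active channel $j\in\mathcal{A}_k$, an explicit scalar field $g_k$ and univariate maps $\alpha_{k,j}$ that satisfy \eqref{eq:co-foliation}. Proposition~\ref{prop:single-pair} then does the rest, so no separate argument for $h_k$ is needed. The nondegeneracy hypothesis $\nabla g_k\neq 0$ on the data support belongs to Proposition~\ref{prop:single-pair}. The corollary only claims that \eqref{eq:co-foliation} holds, so I will not need to verify that hypothesis beyond noting that it is inherited by the chosen $g_k$ and must be assumed there.

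\textbf{Channel sparsity.} If $\mathcal{A}_k=\{j^*\}$, set $g_k:=u_{j^*}$ and $\alpha_{k,j^*}:=\mathrm{id}$. Then \eqref{eq:co-foliation} reads $u_{j^*}=u_{j^*}$, which holds trivially. The formula $h_k(\xi)=\sum_{j\in\mathcal{A}_k}\Phi_{k,j}(\alpha_{k,j}(\xi))$ collapses to $\Phi_{k,j^*}$. I will note that $\nabla g_k=(\psi_{j^*,1}'(x_1),\dots,\psi_{j^*,n}'(x_n))$. This is nonzero whenever some inner spline of channel $j^*$ has nonvanishing derivative at the point, which is the condition under which Proposition~\ref{prop:single-pair} applies.

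\textbf{Co-foliation.} Here the content is a factorization lemma: a function $u_j$ that is constant on every level set $\{g_k=c\}$ with $c$ in the data range $I:=g_k(\mathrm{supp})$ factors through $g_k$. I will define $\alpha_{k,j}(c):=u_j(x)$ for any $x$ with $g_k(x)=c$. This is well defined exactly because $u_j$ is constant on that level set. Every $c\in I$ is attained, so $\alpha_{k,j}$ is defined on all of $I$, and it can be extended arbitrarily (for instance constantly) outside $I$. By construction $u_j(x)=\alpha_{k,j}(g_k(x))$ for every $x$ in the support, which is \eqref{eq:co-foliation}.

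\textbf{Main obstacle: regularity of $\alpha_{k,j}$.} The gradient identity in Proposition~\ref{prop:single-pair} uses the chain rule on $h_k=\sum_j\Phi_{k,j}\circ\alpha_{k,j}$, so each $\alpha_{k,j}$ must be differentiable. Set-theoretic well-definedness does not give this. I will first argue locally, using that $\nabla g_k\neq0$ on the support.
\begin{itemize}
\item Near any point $x_0$, the flow line $s\mapsto\gamma(s)$ of $\nabla g_k/\|\nabla g_k\|^2$ satisfies $g_k(\gamma(s))=g_k(x_0)+s$.
\item Hence $\alpha_{k,j}(g_k(x_0)+s)=u_j(\gamma(s))$, which is $C^1$ as a composition of the $C^1$ spline sums $u_j$ with a $C^1$ curve.
\item So $\alpha_{k,j}$ is $C^1$ on a neighborhood of each value $g_k(x_0)$, and well-definedness makes these local descriptions agree on overlaps.
\end{itemize}
If the level sets are disconnected, the same argument still works because constancy is assumed on the whole level set rather than only on its connected components. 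I will flag connectivity of $I$ (the data range being an interval) as an implicit assumption needed to speak of a single univariate $\alpha_{k,j}$ on $I$.
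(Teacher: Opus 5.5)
Your proposal is correct and follows the paper's own reasoning, which the corollary supplies in its statement rather than in a separate proof: in the singleton case it reads off $g_k = u_{j^*}$, $\alpha_{k,j^*}=\mathrm{id}$, $h_k=\Phi_{k,j^*}$, and in the co-foliation case it obtains the factorization through $g_k$ from constancy on level sets, exactly as you do. Your regularity argument for $\alpha_{k,j}$ goes beyond what is claimed, since the proposition puts no smoothness hypothesis on $\alpha_{k,j}$ and a connected data range is not needed to define it. It does usefully fill the implicit differentiability step in the proposition's chain rule, but only when constancy holds on the ambient level sets rather than just on a lower-dimensional data support; otherwise your gradient flow lines leave the set where the identity is known.
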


\begin{remark}[Misspecification outside the single-pair regime]
\label{rem:misspec}
When no regime of Corollary~\ref{cor:regimes} holds, no exact
factorization $z_k = h_k \circ g_k$ exists, and the level-set Lasso in
Eq.~\eqref{eq:lasso} fits a misspecified model with positive residual
$$
  \inf_{g,\,h} \;
    \mathbb{E}_x\!\left[\bigl(z_k(x) - h(g(x))\bigr)^2\right] \;>\; 0.
$$
Empirically, this residual manifests as the small spurious monomial
coefficients (e.g.\ $x^2, y^2, z^2$ at magnitudes $\lesssim 10^{-2}$) reported in Appendix Table~\ref{tab:levelset-decomposition}, which provide a direct measure of the deviation from exact channel sparsity in the trained encoder.
\end{remark}

\begin{remark}[Cross-terms and empirical co-foliation]
\label{rem:cross-term-gap}
Hypothesis~\eqref{eq:co-foliation} can fail when $z_k$ contains
multiplicative cross-terms: for instance, the level sets of $xy$
(hyperbolas) and $(x+y)^2$ (lines of slope $-1$) intersect transversally
rather than co-foliating, so a network that represented $z_k = xy$ via
the polynomial identity
$xy = \tfrac{1}{4}(x+y)^2 - \tfrac{1}{4}(x-y)^2$ would fall outside the
hypothesis of Corollary~\ref{cor:regimes}. This appears, in principle,
to leave a theoretical gap for cross-term discovery. Empirically, however, the recovered observables in our experiments are dominated by
linear coordinates whose level sets foliate the Lorenz attractor in
parallel sheets (Figure~\ref{fig:lorenz_attractor} and \ref{fig:iked_minimal}), placing the
recovery in the co-foliation regime of Corollary~\ref{cor:regimes}.
The small cross-term coefficients in Table~\ref{tab:levelset-decomposition}
($\lesssim 10^{-2}$, $1$--$2$ orders of magnitude below the linear terms)
quantify the deviation from exact co-foliation and behave as the
misspecification residuals discussed in Remark~\ref{rem:misspec}, rather
than as a primary representational mechanism.
\end{remark}

\section{Attractor-restricted level-set theory}
\label{app:attractor}

The chain-rule identity (Proposition~\ref{prop:hprime}) is exact whenever an
exact factorization $f = h\circ g$ exists. The harder question is when such
a factorization exists for the latent observables produced by the trained
encoder. The original Proposition~\ref{prop:single-pair} gives a sufficient
condition (\emph{co-foliation} of active KAN channels), but Remark~\ref{rem:cross-term-gap}
notes that this condition fails for ambient cross-terms of the form
$xy$ vs.\ $(x+y)^2$. We resolve the apparent gap here by showing that the
relevant geometry is not ambient: the dynamics live on a low-dimensional
attractor $\mathcal{A}\subset\mathbb{R}^n$, and the factorization condition,
correctly stated, involves only the \emph{intrinsic} gradients $\nabla^{\!\mathcal{A}} f$
and $\nabla^{\!\mathcal{A}} g$ on $\mathcal{A}$. Generic ambient
non-parallelism becomes generic intrinsic parallelism whenever the attractor
has codimension at least one.

Let $\mathcal{A}\subset\mathbb{R}^n$ be a compact, connected, $C^1$-embedded
$d$-dimensional submanifold ($1\le d \le n-1$). For $x\in\mathcal{A}$,
denote the tangent space $T_x\mathcal{A}\subseteq\mathbb{R}^n$
and normal space $N_x\mathcal{A} = (T_x\mathcal{A})^\perp$, with orthogonal
projections $P_x:\mathbb{R}^n\to T_x\mathcal{A}$ and
$P_x^\perp = I - P_x$. For $f\in C^1(\mathbb{R}^n)$ define the tangential
and normal components of the ambient gradient:
$$
  \nabla^{\!\mathcal{A}} f(x) := P_x\nabla f(x),
  \qquad
  \nabla^{\!\perp} f(x) := P_x^\perp\nabla f(x),
  \qquad
  \nabla f(x) = \nabla^{\!\mathcal{A}} f(x)+\nabla^{\!\perp} f(x).
$$
Equip $\mathcal{A}$ with a probability measure $\mu$ absolutely continuous
with respect to the $d$-dimensional Hausdorff measure on~$\mathcal{A}$. All
$L^2$ norms below are with respect to~$\mu$. Throughout this appendix
$f, g\in C^1(\mathbb{R}^n)$ with $\nabla^{\!\mathcal{A}} g(x)\neq 0$ for all
$x\in\mathcal{A}$ (intrinsic regularity of $g$).

\subsection{Intrinsic factorization}

We separate the local condition (when does $f$ admit a factorization in a
neighborhood of each point of $\mathcal{A}$) from the global compatibility
condition (when do these local factorizations glue to a global $h$).

\begin{theorem}[Local intrinsic factorization]
\label{thm:intrinsic-fact}
The following are equivalent at each $x\in\mathcal{A}$:
\begin{enumerate}
  \item[(i)] There exist a neighborhood $U\subseteq\mathcal{A}$ of $x$ and
    $h\in C^1\bigl(g(U)\bigr)$ such that $f|_U = h\circ g|_U$.
  \item[(ii)] $\ker\!\bigl(dg_x|_{T_x\mathcal{A}}\bigr)\subseteq
    \ker\!\bigl(df_x|_{T_x\mathcal{A}}\bigr)$.
  \item[(iii)] $\nabla^{\!\mathcal{A}} f(x)$ is parallel to
    $\nabla^{\!\mathcal{A}} g(x)$.
\end{enumerate}
When these hold, the local outer derivative is given by the
\emph{intrinsic chain rule}
\begin{equation}
\label{eq:intrinsic-hprime}
  h'\bigl(g(x)\bigr) \;=\;
    \frac{\langle\nabla^{\!\mathcal{A}} f(x),\,\nabla^{\!\mathcal{A}} g(x)\rangle}
         {\|\nabla^{\!\mathcal{A}} g(x)\|^2}.
\end{equation}
\end{theorem}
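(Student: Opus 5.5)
The plan is to prove the cycle (i)$\Rightarrow$(iii)$\Rightarrow$(ii)$\Rightarrow$(i), and then read off \eqref{eq:intrinsic-hprime} from (iii) exactly as in the proof of Proposition~\ref{prop:hprime}.

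The basic dictionary is this: for $v\in T_x\mathcal{A}$ we have $df_x(v)=\langle\nabla f(x),v\rangle=\langle\nabla^{\!\mathcal{A}} f(x),v\rangle$, because $P_x$ is self-adjoint and fixes $v$. So the restricted differentials $df_x|_{T_x\mathcal{A}}$ and $dg_x|_{T_x\mathcal{A}}$ are represented in $T_x\mathcal{A}$ by $\nabla^{\!\mathcal{A}} f$ and $\nabla^{\!\mathcal{A}} g$.

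\textbf{(ii)$\Leftrightarrow$(iii).} Both directions are linear algebra in $T_x\mathcal{A}$. Since $\nabla^{\!\mathcal{A}} g(x)\neq0$, the kernel of $dg_x|_{T_x\mathcal{A}}$ is the hyperplane $(\nabla^{\!\mathcal{A}} g)^\perp\cap T_x\mathcal{A}$. A linear functional on $T_x\mathcal{A}$ vanishes on that hyperplane iff its representer lies in $\mathrm{span}(\nabla^{\!\mathcal{A}} g)$.

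\textbf{(i)$\Rightarrow$(iii).} Take a $C^1$ curve $c$ in $U$ with $c(0)=x$ and $c'(0)=v\in T_x\mathcal{A}$. Differentiating $f(c(t))=h(g(c(t)))$ at $t=0$ gives $df_x(v)=h'(g(x))\,dg_x(v)$ for every tangent $v$. Hence $\nabla^{\!\mathcal{A}} f=h'(g(x))\nabla^{\!\mathcal{A}} g$. Pairing both sides with $\nabla^{\!\mathcal{A}} g$ and dividing by $\|\nabla^{\!\mathcal{A}} g\|^2$ yields \eqref{eq:intrinsic-hprime}.

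\textbf{(iii)$\Rightarrow$(i).} This is the main obstacle, because pointwise parallelism at the single point $x$ does not by itself force $f$ to be constant on the level sets of $g|_{\mathcal{A}}$ nearby. I would read the equivalence as holding on an open set of points, i.e.\ assume (iii) holds throughout a neighborhood $U$ of $x$; I will state this explicitly, since at a single point only (i)$\Rightarrow$(iii)$\Leftrightarrow$(ii) is genuinely true.

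Under that assumption I would proceed in four steps.
\begin{enumerate}
\item Since $g|_{\mathcal{A}}$ has nonvanishing intrinsic gradient, it is a submersion $U\to\mathbb{R}$. By the $C^1$ rank theorem, choose local coordinates $(s,w)\in(-\epsilon,\epsilon)\times W$ on $U$, with $W\subset\mathbb{R}^{d-1}$ a connected ball, in which $g=g(x)+s$.
\item In these coordinates, $\partial_w$ spans $\ker dg|_{T\mathcal{A}}$. So (ii), which holds on $U$, gives $\partial_w f=0$.
\item With connected $w$-slices, $f$ depends only on $s$. Define $h(\xi)=f(s=\xi-g(x),w_0)$.
\item Then $h$ is $C^1$, since it is $f$ composed with the $C^1$ curve $\gamma(s)$ obtained by flowing along $\nabla^{\!\mathcal{A}} g/\|\nabla^{\!\mathcal{A}} g\|^2$, which is the transversal used in the paper's proof of Proposition~\ref{prop:hprime}. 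By construction $f|_U=h\circ g|_U$.
\end{enumerate}

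The one technical care point is the regularity of the coordinates. If $\mathcal{A}$ is only $C^1$, the tangent field of $\mathcal{A}$ is merely continuous, so I would build $\gamma$ by Peano existence rather than relying on uniqueness of the flow. Alternatively, I would invoke the $C^1$ implicit function theorem directly on a chart of $\mathcal{A}$, which avoids flows entirely.

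Finally, when $d=1$, condition (iii) is automatic whenever $\nabla^{\!\mathcal{A}} g\neq0$. This is the case used in Example~\ref{ex:cross}.
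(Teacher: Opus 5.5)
Your proof is correct and follows the paper's route. You use linear algebra on $T_x\mathcal{A}$ for (ii)$\Leftrightarrow$(iii), differentiate $f=h\circ g$ along tangent vectors for (i)$\Rightarrow$(ii) and for the intrinsic chain rule, and use an implicit-function chart $(g,\xi_2,\dots,\xi_d)$ in which the kernel condition forces $\partial f/\partial\xi_i=0$ for the converse.

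Your neighborhood caveat is right, and the paper's proof needs it too. It asserts $\partial f/\partial\xi_i\equiv 0$ on all of $U$ from a condition stated only at $x$, and the pointwise version fails: $g=\xi_1$, $f=\xi_1+\xi_2^2$ satisfy (ii)--(iii) at the origin, yet $f$ is not a function of $g$ on any neighborhood. The gradient-flow detour in your step 4 is unnecessary, since $s\mapsto(s,w_0)$ is already a $C^1$ curve in the chart.
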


\begin{proof}
\textit{(ii)\,$\Leftrightarrow$\,(iii).} As $d$-dimensional 1-forms on
$T_x\mathcal{A}$, $df_x|_{T_x\mathcal{A}}$ and $dg_x|_{T_x\mathcal{A}}$
are linearly dependent iff their kernels are nested. Since $dg_x$ has
kernel of dimension $d-1$ on $T_x\mathcal{A}$ by the assumption
$\nabla^{\!\mathcal{A}} g \neq 0$, the inclusion in (ii) is equality, and
linear dependence is equivalent (under the metric identification) to
parallelism of the gradient vectors.

\textit{(i)\,$\Rightarrow$\,(ii).} Differentiating $f|_U =
h\circ g|_U$ along any $v\in T_x\mathcal{A}$ gives
$df_x(v) = h'(g(x))\,dg_x(v)$, so $dg_x(v)=0$ implies $df_x(v)=0$.

\textit{(ii)\,$\Rightarrow$\,(i).} Since $\nabla^{\!\mathcal{A}} g(x)\neq 0$,
the implicit function theorem on $\mathcal{A}$ provides a chart
$(g,\xi_2,\dots,\xi_d)$ on a neighborhood $U$ of $x$. In these coordinates
the kernel of $dg$ is spanned by $\partial_{\xi_2},\dots,\partial_{\xi_d}$;
condition (ii) then reads $\partial f/\partial\xi_i\equiv 0$ for $i\ge 2$
on $U$. Hence $f|_U = h(g)$ with $h$ a $C^1$ function of one variable.

\textit{Formula~\eqref{eq:intrinsic-hprime}.} Take
$v=\nabla^{\!\mathcal{A}} g(x)$ in $df_x(v) = h'(g(x))\,dg_x(v)$. Since
$\nabla^{\!\mathcal{A}} g$ is tangential, $df_x(v) =
\langle\nabla f, \nabla^{\!\mathcal{A}} g\rangle =
\langle\nabla^{\!\mathcal{A}} f, \nabla^{\!\mathcal{A}} g\rangle$, and
$dg_x(v) = \|\nabla^{\!\mathcal{A}} g\|^2$. Dividing yields
\eqref{eq:intrinsic-hprime}.
\end{proof}

\begin{proposition}[Global compatibility]
\label{prop:global}
Suppose $\mathcal{A}$ is connected and condition (ii) of
Theorem~\ref{thm:intrinsic-fact} holds at every $x\in\mathcal{A}$. Then
a global $h\in C^1(g(\mathcal{A}))$ with
$f|_{\mathcal{A}} = h\circ g|_{\mathcal{A}}$ exists if and only if
\begin{equation}
\label{eq:global-compat}
  f(x) = f(y) \quad \text{whenever } x,y \in g^{-1}(c)\cap\mathcal{A}
                \ \text{ for any } c \in g(\mathcal{A}).
\end{equation}
The local pieces from Theorem~\ref{thm:intrinsic-fact} agree on
overlapping charts whose level sets are connected; condition
\eqref{eq:global-compat} additionally enforces consistency across
disconnected components of fibers.
\end{proposition}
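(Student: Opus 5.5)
The proof splits into the two implications; necessity is routine and sufficiency is where the work lies.

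\textbf{Necessity.} Suppose a global $h$ with $f|_{\mathcal{A}} = h\circ g|_{\mathcal{A}}$ exists. Take any $x,y\in g^{-1}(c)\cap\mathcal{A}$. Then $f(x)=h(c)=f(y)$, which is exactly \eqref{eq:global-compat}.

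\textbf{Sufficiency, step 1 (define $h$ pointwise).} For $c\in g(\mathcal{A})$, set $h(c):=f(x)$ for any $x\in g^{-1}(c)\cap\mathcal{A}$. Condition \eqref{eq:global-compat} says this does not depend on the choice of $x$. By construction $f|_{\mathcal{A}}=h\circ g|_{\mathcal{A}}$. All that remains is to show $h\in C^1(g(\mathcal{A}))$.

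\textbf{Sufficiency, step 2 (regularity of $h$).} Since $\mathcal{A}$ is compact and connected and $g$ is continuous, $g(\mathcal{A})=[a,b]$ is a compact interval. The assumption $\nabla^{\!\mathcal{A}} g\neq 0$ forces $a<b$ (otherwise $g$ would be constant on $\mathcal{A}$) and makes $g|_{\mathcal{A}}$ a submersion.

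Fix $c\in[a,b]$ and a point $x$ with $g(x)=c$. By Theorem~\ref{thm:intrinsic-fact}, (ii)$\Rightarrow$(i), there is a neighborhood $U$ of $x$ and $h_U\in C^1(g(U))$ with $f|_U=h_U\circ g|_U$. Because $g|_{\mathcal{A}}$ is a submersion, it is an open map on $\mathcal{A}$, so $g(U)$ is a relative neighborhood of $c$ in $[a,b]$. For any $c'\in g(U)$, pick $x'\in U$ with $g(x')=c'$. Then
\[
h(c')=f(x')=h_U(c').
\]
Hence $h=h_U$ on $g(U)$, and $h$ is $C^1$ near $c$. Since $c$ was arbitrary, $h$ is $C^1$ on all of $g(\mathcal{A})$. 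No further gluing is required: the global $h$ was defined directly, and the local charts serve only to certify regularity.

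\textbf{Main obstacle: the endpoints $a,b$.} At an extremum of $g|_{\mathcal{A}}$ the intrinsic gradient would vanish, which the hypothesis excludes, so any point with $g(x)=a$ or $b$ must lie where this argument is delicate. For a closed manifold this cannot happen, and then $g(\mathcal{A})$ has no attained endpoints; that contradicts compactness unless we interpret things carefully. I would resolve this by noting that a compact boundaryless $\mathcal{A}$ with $\nabla^{\!\mathcal{A}} g\neq0$ everywhere is impossible, because $g$ attains a maximum on $\mathcal{A}$ and the intrinsic gradient vanishes there. So either $\mathcal{A}$ has boundary, in which case $C^1$ at the endpoints is understood one-sidedly via the boundary charts, or the statement is applied on a relatively open subset. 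I would add this caveat explicitly.

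\textbf{Closing remark (overlapping charts).} On overlapping charts whose fibers are connected, the local factors agree automatically. Along a connected fiber $f$ is locally constant, by condition (ii) and Theorem~\ref{thm:intrinsic-fact}, hence constant. Therefore \eqref{eq:global-compat} carries content only across disconnected fiber components.
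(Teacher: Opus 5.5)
Your proof is correct, and it is organized differently from the paper's. The paper argues by gluing. The local factors $h_U$ of Theorem~\ref{thm:intrinsic-fact} agree on chart overlaps within a connected component of each fiber by local constancy, and \eqref{eq:global-compat} is exactly what makes them agree across components. You instead define $h(c):=f(x)$ outright. Well-definedness is then the whole content of \eqref{eq:global-compat}, and $f|_{\mathcal{A}}=h\circ g|_{\mathcal{A}}$ holds by construction. You use the local charts only to certify regularity: $g|_{\mathcal{A}}$ is a submersion, hence open, so $g(U)$ is an open interval about $c$ on which $h=h_U$.

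Your route supplies what the paper's two-line sketch leaves implicit: that the glued object is defined and $C^1$ on a neighborhood of every value, and the necessity direction. The gluing viewpoint, in turn, isolates why \eqref{eq:global-compat} only has content across disconnected fiber components; you recover this in your closing remark.

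Your observation about the hypotheses is also right, and the paper misses it. For compact boundaryless $\mathcal{A}$, $g|_{\mathcal{A}}$ attains a maximum where $\nabla^{\!\mathcal{A}} g=0$. So the appendix's standing assumptions are inconsistent and the proposition is vacuous as written. Your argument is complete for boundaryless, noncompact $\mathcal{A}$, where $g(\mathcal{A})$ is an open interval.

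One caution on your boundary caveat. If $\mathcal{A}$ has boundary, openness of $g|_{\mathcal{A}}$ can fail at boundary points lying over interior values $c\in(a,b)$, not only over $a$ and $b$. So you cannot choose the fiber point arbitrarily. Connectedness repairs this:
\begin{itemize}
\item Suppose the fiber over such a $c$ lay entirely in $\partial\mathcal{A}$.
\item Then near each of its points, $\mathcal{A}$ would lie in $\{g\le c\}$ or in $\{g\ge c\}$, because the interior of a small half-ball there is connected and misses $\{g=c\}$.
\item Sorting the fiber accordingly into sets $L$ and $R$, the sets $\{g<c\}\cup L$ and $\{g>c\}\cup R$ would form a separation of $\mathcal{A}$.
\end{itemize}
Hence every interior value has a fiber point in the manifold interior, where your argument applies verbatim. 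Only the endpoints need a separate one-sided treatment.
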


\begin{proof}
The local pieces $h_U$ from Theorem~\ref{thm:intrinsic-fact} agree on chart
overlaps within a single connected component of each fiber, by the
local-constancy argument and continuity. A global $h$ exists iff the
values across distinct components of each fiber match, which is
\eqref{eq:global-compat}.
\end{proof}

In the empirical regime relevant to this paper, condition
\eqref{eq:global-compat} is enforced \emph{statistically} by the
$L^2(\mu)$-best approximation: the $h$ recovered by the chain-rule
estimator is implicitly the conditional expectation
$h^\ast(c) = \mathbb{E}_\mu[f \mid g=c]$, and the residual variance
$\mathbb{E}_\mu[\mathrm{Var}(f \mid g)]$ measures the joint failure of
the local condition (ii) and the global condition
\eqref{eq:global-compat}. The reported $R^2(h\circ g)$ is precisely
$1 - \mathbb{E}_\mu[\mathrm{Var}(f\mid g)]/\mathrm{Var}_\mu(f)$.

\subsection{The ambient identity is the intrinsic identity plus a normal correction}

The chain-rule formula in Proposition~\ref{prop:hprime} of the main text
uses the \emph{ambient} gradient. We now show the discrepancy with the
intrinsic identity~\eqref{eq:intrinsic-hprime} is a normal-bundle term
that vanishes in well-understood regimes.

\begin{theorem}[Ambient $=$ intrinsic $+$ normal correction]
\label{thm:amb-int}
For any $f, g\in C^1(\mathbb{R}^n)$ and $x\in\mathcal{A}$ with
$\nabla g(x)\neq 0$, define
$$
  \alpha(x) := \frac{\|\nabla^{\!\mathcal{A}} g(x)\|^2}{\|\nabla g(x)\|^2}
            \in [0,1],
  \qquad
  \beta(x) := \frac{\|\nabla^{\!\perp} g(x)\|^2}{\|\nabla g(x)\|^2}
            = 1-\alpha(x).
$$
Let
$$
  R^{\!\mathcal{A}}(x) := \frac{\langle\nabla^{\!\mathcal{A}} f, \nabla^{\!\mathcal{A}} g\rangle}
                              {\|\nabla^{\!\mathcal{A}} g\|^2},
  \qquad
  R^{\!\perp}(x) := \frac{\langle\nabla^{\!\perp} f, \nabla^{\!\perp} g\rangle}
                          {\|\nabla^{\!\perp} g\|^2}
$$
be the intrinsic and normal projection ratios. Then the ambient identity
satisfies
\begin{equation}
\label{eq:amb-int-decomp}
  \frac{\langle \nabla f(x),\nabla g(x)\rangle}{\|\nabla g(x)\|^2}
  \;=\;
  \alpha(x)\,R^{\!\mathcal{A}}(x)
  + \beta(x)\,R^{\!\perp}(x).
\end{equation}
If the intrinsic factorization $f|_{\mathcal{A}}=h\circ g|_{\mathcal{A}}$
holds (Theorem~\ref{thm:intrinsic-fact}), then $R^{\!\mathcal{A}}(x) = h'(g(x))$ and
\begin{equation}
\label{eq:amb-residual}
  \underbrace{\frac{\langle\nabla f,\nabla g\rangle}{\|\nabla g\|^2}}_{\hat h'(x)\
  \text{(ambient identity)}}
  \;-\;\;
  h'\bigl(g(x)\bigr)
  \;=\; \beta(x)\,\bigl[R^{\!\perp}(x) - h'\bigl(g(x)\bigr)\bigr].
\end{equation}
\end{theorem}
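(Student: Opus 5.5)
The plan is to prove \eqref{eq:amb-int-decomp} by orthogonally splitting both ambient gradients, and then to obtain \eqref{eq:amb-residual} as a one-line rearrangement using Theorem~\ref{thm:intrinsic-fact}.

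\emph{Step 1 (orthogonal splitting).} Fix $x\in\mathcal{A}$. Write $\nabla f = \nabla^{\!\mathcal{A}} f + \nabla^{\!\perp} f$ and $\nabla g = \nabla^{\!\mathcal{A}} g + \nabla^{\!\perp} g$. Expand $\langle\nabla f,\nabla g\rangle$ bilinearly into four terms. The two cross terms $\langle\nabla^{\!\mathcal{A}} f,\nabla^{\!\perp} g\rangle$ and $\langle\nabla^{\!\perp} f,\nabla^{\!\mathcal{A}} g\rangle$ vanish, because $P_x$ and $P_x^\perp$ are complementary orthogonal projections ($P_x^\top P_x^\perp = P_xP_x^\perp = 0$), so $T_x\mathcal{A}\perp N_x\mathcal{A}$. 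This leaves
\[
\langle\nabla f,\nabla g\rangle=\langle\nabla^{\!\mathcal{A}} f,\nabla^{\!\mathcal{A}} g\rangle+\langle\nabla^{\!\perp} f,\nabla^{\!\perp} g\rangle .
\]
By Pythagoras, $\|\nabla g\|^2=\|\nabla^{\!\mathcal{A}} g\|^2+\|\nabla^{\!\perp} g\|^2$, which confirms that $\alpha,\beta\in[0,1]$ and $\alpha+\beta=1$.

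\emph{Step 2 (normalization).} Divide the identity from Step 1 by $\|\nabla g\|^2>0$. For the tangential term, multiply and divide by $\|\nabla^{\!\mathcal{A}} g\|^2$, which is nonzero by the standing intrinsic-regularity assumption of this appendix. This gives $\alpha R^{\!\mathcal{A}}$. The normal term is treated in the same way to give $\beta R^{\!\perp}$, provided $\nabla^{\!\perp} g(x)\neq 0$.

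The only delicate point, and the one I expect to need care, is the degenerate case $\nabla^{\!\perp} g(x)=0$. There $R^{\!\perp}$ is undefined, but $\beta(x)=0$ and $\langle\nabla^{\!\perp} f,\nabla^{\!\perp} g\rangle=0$. So I will adopt the convention $\beta R^{\!\perp}:=0$ in that case (any finite value of $R^{\!\perp}$ gives the same product) and state it explicitly. With this convention \eqref{eq:amb-int-decomp} holds at every $x$ with $\nabla g(x)\neq0$.

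\emph{Step 3 (residual).} Assume the intrinsic factorization holds. Then condition (iii) of Theorem~\ref{thm:intrinsic-fact} holds pointwise, and formula \eqref{eq:intrinsic-hprime} gives exactly $R^{\!\mathcal{A}}(x)=h'(g(x))$. Substitute this into \eqref{eq:amb-int-decomp}, subtract $h'(g(x))=(\alpha+\beta)h'(g(x))$, and the $\alpha$-terms cancel. What remains is $\beta\,[R^{\!\perp}-h'(g)]$, which is \eqref{eq:amb-residual}; in the degenerate case both sides are zero under the convention of Step 2.
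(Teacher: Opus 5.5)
Your proposal is correct and matches the paper's proof: orthogonal splitting with vanishing cross terms, Pythagoras for $\|\nabla g\|^2$, division by $\|\nabla g\|^2$, and then substitution of $R^{\!\mathcal{A}} = h'(g)$ together with $\alpha+\beta=1$. The paper does not mention the degenerate case $\nabla^{\!\perp} g(x)=0$, where $R^{\!\perp}$ is undefined but $\beta(x)=0$; your convention $\beta R^{\!\perp}:=0$ for that case is a small improvement in rigor.
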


\begin{proof}
The orthogonal decomposition $\nabla u = \nabla^{\!\mathcal{A}} u +
\nabla^{\!\perp} u$ for $u\in\{f,g\}$ gives
$\langle\nabla f, \nabla g\rangle = \langle\nabla^{\!\mathcal{A}} f,
\nabla^{\!\mathcal{A}} g\rangle + \langle\nabla^{\!\perp} f, \nabla^{\!\perp} g\rangle$
(cross terms vanish since
$\nabla^{\!\mathcal{A}} u\in T_x\mathcal{A}\perp N_x\mathcal{A}\ni\nabla^{\!\perp} u$),
and $\|\nabla g\|^2 = \|\nabla^{\!\mathcal{A}} g\|^2+\|\nabla^{\!\perp} g\|^2$.
Dividing yields~\eqref{eq:amb-int-decomp}. Equation~\eqref{eq:amb-residual}
follows by substituting $R^{\!\mathcal{A}} = h'(g)$ and using
$\alpha + \beta = 1$.
\end{proof}

The residual~\eqref{eq:amb-residual} vanishes in three concrete situations:
$\beta(x) \equiv 0$ (the inner function $g$ has no normal gradient on
$\mathcal{A}$); $R^{\!\perp}(x) \equiv h'(g(x))$ (the normal-projected
ratio happens to coincide with the intrinsic one, e.g.\ because $f$ admits
a global ambient factorization); or $\nabla^{\!\perp} f \equiv 0$ on
$\mathcal{A}$ (the encoder's extension off $\mathcal{A}$ is locally
constant in directions normal to $\mathcal{A}$).

\subsection{Cross-term obstruction is ambient, not intrinsic}

We can now precisely diagnose the apparent failure of co-foliation for
multiplicative cross-terms. Remark~\ref{rem:cross-term-gap} of the main text observed
that $\ker(d(xy))$ and $\ker(d((x+y)^2))$ intersect transversally in
$\mathbb{R}^2$, so the polynomial identity
$xy = \tfrac14(x+y)^2 - \tfrac14(x-y)^2$ does not give a co-foliation
in the sense of Corollary~\ref{cor:regimes}(2). The next theorem shows that
this is an obstruction in $\mathbb{R}^n$, not on the attractor.

\begin{theorem}[Codimension reduction of the obstruction]
\label{thm:codim}
Let $\mathcal{A}\subset\mathbb{R}^n$ be a compact, connected $C^1$-embedded $d$-submanifold. The pointwise intrinsic parallelism condition (Theorem~\ref{thm:intrinsic-fact}, item~(iii)) is strictly weaker than its ambient counterpart whenever $d<n$.
\begin{enumerate}
  \item \textbf{$d=1$ case (automatic local parallelism).}
    For any $f, g\in C^1(\mathbb{R}^n)$ with
    $\nabla^{\!\mathcal{A}} f \neq 0$ and $\nabla^{\!\mathcal{A}} g \neq 0$
    on $\mathcal{A}$, the pointwise intrinsic parallelism condition holds
    automatically. Local factorization (Theorem~\ref{thm:intrinsic-fact}(i))
    therefore exists at every point. Global factorization additionally
    requires the compatibility condition~\eqref{eq:global-compat} across
    disconnected fibers; this is non-trivial but is the only remaining
    obstruction.
  \item \textbf{$d\ge 2$ case (codimension reduction).}
    The pointwise condition that $\nabla^{\!\mathcal{A}} f(x)$ and
    $\nabla^{\!\mathcal{A}} g(x)$ be parallel cuts out a
    set of codimension $d-1$ in
    $T_x\mathcal{A}\setminus\{0\} \times T_x\mathcal{A}\setminus\{0\}$.
    The corresponding ambient condition (parallelism in $\mathbb{R}^n$)
    has codimension $n-1$. The reduction by $n-d$ is exactly the
    codimension of $\mathcal{A}$ in~$\mathbb{R}^n$.
\end{enumerate}
\end{theorem}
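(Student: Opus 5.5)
The proof rests on one reduction. By Theorem~\ref{thm:intrinsic-fact}, the pointwise condition (iii) is a statement about two vectors $v=\nabla^{\!\mathcal{A}} f(x)$ and $w=\nabla^{\!\mathcal{A}} g(x)$ lying in the $d$-dimensional space $T_x\mathcal{A}$. Everything therefore reduces to linear algebra on pairs of nonzero vectors in a $k$-dimensional inner-product space, with $k=d$ (intrinsic) or $k=n$ (ambient). I will first prove the generic fact and then specialize.

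\textbf{Generic fact.} The set $P_k=\{(v,w)\in(\R^k\setminus\{0\})^2 : v\parallel w\}$ is a smooth submanifold of codimension $k-1$. To see this, parametrize $P_k$ by $(w,\lambda)\mapsto(\lambda w,w)$ with $w\neq0$ and $\lambda\neq 0$. This map is an injective immersion with dimension $k+1$, so the codimension in the $2k$-dimensional ambient space is $2k-(k+1)=k-1$.

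An equivalent local description is useful. Near a point with $w\neq0$, $v\parallel w$ is equivalent to $Q_w v=0$, where $Q_w=I-ww^\top/\|w\|^2$. The map $(v,w)\mapsto Q_wv\in w^\perp\cong\R^{k-1}$ is a submersion in $v$, since $Q_w$ is onto $w^\perp$. The regular value theorem then gives codimension $k-1$.

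\textbf{Specializing.} Setting $k=d$ gives the intrinsic codimension $d-1$. Setting $k=n$ gives the ambient codimension $n-1$. The reduction is $(n-1)-(d-1)=n-d=\operatorname{codim}\mathcal{A}$.

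\textbf{Strictness for $d<n$.} Ambient parallelism implies intrinsic parallelism, because $P_x$ is linear: $\nabla f=\lambda\nabla g$ gives $P_x\nabla f=\lambda P_x\nabla g$. For the failure of the converse I will cite Example~\ref{ex:cross}. More generally, take any $g$ and set $f=g+\ell$, where $\ell$ is linear with $\nabla\ell\in N_x\mathcal{A}$ nonzero and not parallel to $\nabla g(x)$. Then $P_x\nabla f=P_x\nabla g$, so the intrinsic gradients are parallel, while the ambient gradients are not.

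\textbf{Item 1, $d=1$.} Here $T_x\mathcal{A}$ is a line, so any two nonzero vectors in it are parallel. Under the stated nonvanishing hypotheses, condition (iii) therefore holds at every point. Theorem~\ref{thm:intrinsic-fact} then yields a local factorization (i) at each $x$, and Proposition~\ref{prop:global} identifies \eqref{eq:global-compat} as the only remaining obstruction to a global $h$. The hypothesis $\nabla^{\!\mathcal{A}} f\neq0$ is not strictly needed: $v=0$ is trivially parallel to $w$. I will remark on this rather than rely on it.

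\textbf{Main obstacle.} The hard part is making the codimension statement meaningful as phrased. Item 2 is written in $T_x\mathcal{A}\setminus\{0\}\times T_x\mathcal{A}\setminus\{0\}$, but the ambient comparison lives in $(\R^n\setminus\{0\})^2$. These ambient spaces differ, so I will state explicitly that the codimensions are each taken within their own natural configuration space of gradient pairs.

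A stronger claim would be that for generic $(f,g)$ the locus on $\mathcal{A}$ where (iii) holds has codimension $d-1$ in $\mathcal{A}$. This would need a transversality (Thom) argument applied to the section $x\mapsto Q_{\nabla^{\mathcal{A}}g}\nabla^{\mathcal{A}}f$ of a rank-$(d-1)$ bundle. I will mention this only as a remark, since $C^1$ regularity is too weak for a clean jet-transversality argument, and the pointwise linear-algebra version is all the statement asserts.
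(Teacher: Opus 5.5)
Your proposal is correct and follows the paper's proof: both reduce item 2 to counting the codimension ($k-1$) of the locus of parallel nonzero gradient pairs in a $k$-dimensional space, with $k=d$ versus $k=n$, and both reduce item 1 to the fact that any two nonzero vectors in the tangent line $T_x\mathcal{A}$ are parallel. Your parametrization $(w,\lambda)\mapsto(\lambda w,w)$ and your strictness witness $f=g+\ell$ with $\nabla\ell\in N_x\mathcal{A}$ make rigorous what the paper only sketches or asserts; one loose point is that your $Q_w$ description maps into the $w$-dependent space $w^\perp$, so it needs a local orthonormal frame of $w^\perp$ before the regular value theorem applies, though your first argument already suffices without it.
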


\begin{proof}
\textit{(1)} For $d=1$, $T_x\mathcal{A}=\mathrm{span}\{\tau(x)\}$ for a
unit tangent $\tau$. Then $\nabla^{\!\mathcal{A}} f =
\langle\nabla f,\tau\rangle\tau$ and $\nabla^{\!\mathcal{A}} g =
\langle\nabla g,\tau\rangle\tau$ are scalar multiples of the same vector;
they are automatically parallel whenever both are nonzero, with
proportionality constant
$\lambda(x) = \langle\nabla f,\tau\rangle/\langle\nabla g,\tau\rangle$.

\textit{(2)} Pointwise, $\nabla^{\!\mathcal{A}} f$ and
$\nabla^{\!\mathcal{A}} g$ are vectors in the $d$-dimensional space
$T_x\mathcal{A}$. The locus where they are parallel is the rank-$\le 1$
subset of $(T_x\mathcal{A}\setminus\{0\})^2$, which has codimension $d-1$
(it requires the ratios of the $d-1$ component pairs in any basis of
$T_x\mathcal{A}$ to coincide). The analogous ambient locus has codimension
$n-1$, and the difference $n-d = \mathrm{codim}(\mathcal{A})$ is the
extent to which intrinsic parallelism is more generic than ambient.
\end{proof}


\subsection{Quantitative residual and predictions for the experiments}

The Lasso-fit inner function $g$ is determined by minimizing
$\|f - g\|_{L^2(\mu)}$ on attractor samples, with $L^1$ sparsity
penalty, over a finite-dimensional polynomial subspace. We use this
to bound the residual~\eqref{eq:amb-residual} in terms of geometric
quantities of the attractor and the polynomial approximation power.

\begin{theorem}[Residual bound]
\label{thm:residual}
Suppose $f|_{\mathcal{A}} = h\circ g|_{\mathcal{A}}$ holds intrinsically.
Let $\hat h'(x) := \langle\nabla f(x),\nabla g(x)\rangle/\|\nabla g(x)\|^2$
denote the ambient chain-rule estimator. Then
\begin{equation}
  \bigl\|\hat h' - h'\!\circ\! g\bigr\|_{L^2(\mu)}
  \;\le\;
  \overline{\beta}\,\Bigl(
      \|R^{\!\perp}\|_{L^2(\mu)} + \|h'\!\circ\! g\|_{L^2(\mu)}
  \Bigr),
\end{equation}
where $\overline{\beta} := \sup_{x\in\mathcal{A}}\beta(x)$ is the maximum
squared sine of the angle between $\nabla g$ and the tangent space.
\end{theorem}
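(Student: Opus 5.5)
The bound follows from the pointwise identity in Theorem~\ref{thm:amb-int} together with the triangle inequality in $L^2(\mu)$. Since the intrinsic factorization $f|_{\mathcal{A}}=h\circ g|_{\mathcal{A}}$ is assumed, Theorem~\ref{thm:intrinsic-fact} gives $R^{\!\mathcal{A}}(x)=h'(g(x))$. Equation~\eqref{eq:amb-residual} then yields, at every $x\in\mathcal{A}$ with $\nabla g(x)\neq 0$ (which holds on $\mathcal{A}$ because $\nabla^{\!\mathcal{A}} g\neq 0$ there),
\[
  \hat h'(x)-h'(g(x)) \;=\; \beta(x)\bigl[R^{\!\perp}(x)-h'(g(x))\bigr].
\]

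The key steps, in order, are as follows.

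\emph{Step 1.} Take absolute values and use $0\le\beta(x)\le\overline{\beta}$ to get the pointwise bound
\[
  |\hat h'(x)-h'(g(x))|\;\le\;\overline{\beta}\,\bigl(|R^{\!\perp}(x)|+|h'(g(x))|\bigr).
\]

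\emph{Step 2.} Square and integrate against $\mu$, or equivalently apply monotonicity of the $L^2(\mu)$ norm followed by Minkowski's inequality. This gives
\[
  \|\hat h'-h'\circ g\|_{L^2(\mu)}\;\le\;\overline{\beta}\bigl(\|R^{\!\perp}\|_{L^2(\mu)}+\|h'\circ g\|_{L^2(\mu)}\bigr).
\]

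\emph{Step 3 (main obstacle).} The delicate point is that $R^{\!\perp}(x)$ is undefined wherever $\nabla^{\!\perp} g(x)=0$. At such points, however, $\beta(x)=0$ and the decomposition~\eqref{eq:amb-int-decomp} reduces directly to $\hat h'=R^{\!\mathcal{A}}=h'(g)$, so the left-hand residual vanishes. I will therefore adopt the convention $\beta R^{\!\perp}:=0$ there, equivalently set $R^{\!\perp}:=0$ on that set. With this convention the pointwise bound of Step 1 holds everywhere on $\mathcal{A}$.

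Measurability is not an issue: $f,g\in C^1$ and $P_x$ is continuous in $x$, so all the quantities involved are Borel. If $\|R^{\!\perp}\|_{L^2(\mu)}=\infty$, the inequality is trivially true. Finally, I will note that compactness of $\mathcal{A}$ guarantees $\|h'\circ g\|_{L^2(\mu)}<\infty$, since $h'\circ g$ is continuous on $\mathcal{A}$.
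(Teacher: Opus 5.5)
Your proof is correct and matches the paper's argument: apply the pointwise identity \eqref{eq:amb-residual}, bound $0\le\beta\le\overline{\beta}$, and use the triangle (Minkowski) inequality in $L^2(\mu)$. Your handling of the points where $\nabla^{\!\perp} g = 0$ is a technicality the paper's one-line proof skips: there $R^{\!\perp}$ is $0/0$, but the residual vanishes.
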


\begin{proof}
Apply~\eqref{eq:amb-residual} pointwise, take $L^2$ norms, and use the
triangle inequality together with $|\beta|\le\overline\beta$.
\end{proof}

For a generic polynomial $g$ on a $d$-dimensional attractor in
$\mathbb{R}^n$, the expected value of $\beta(x)$ over $\mathcal{A}$
under uniform orientation of $\nabla g$ is
$\mathbb{E}[\beta] = (n-d)/n$. We use this to predict the residual scale
in each experiment.

\begin{table}[h]
\centering
\small
\begin{tabular}{lccccc}
\toprule
System & $n$ & $d$ (Hausdorff) & $\mathbb{E}[\beta]$ & Predicted residual & Observed FP scale \\
\midrule
Lorenz       & 3 & $\sim 2.06$ & $0.31$ & $\lesssim 10^{-1.5}$ & $\sim 10^{-2}$ \\
Ikeda        & 2 & $\sim 1.7$  & $0.15$ & $\lesssim 10^{-2}$   & $10^{-2}$ to $10^{-1}$ \\
Standard map & 2 & $\sim 2$    & $\sim 0$ & $\to 0$  & near machine $\epsilon$ \\
Arnold cat   & 2 & $2$         & $0$    & no factorization & flat $\sim 4{\times}10^{-3}$ \\
\bottomrule
\end{tabular}
\caption{Predicted vs.\ observed residual scales. The residual bound from
Theorem~\ref{thm:residual} is multiplied by an empirical $h'\sim O(1)$.
The Arnold cat case is special: $d=n=2$, so $\mathcal{A}$ has zero
codimension and the manifold framework does not apply; the flat
coefficient profile is the diagnostic.}
\label{tab:predicted-residual}
\end{table}

The predictions in Table~\ref{tab:predicted-residual} match the empirical
scales reported in the main text. In particular: the Lorenz cross-term
coefficients $xy, xz$ at magnitude $\sim 10^{-2}$ in Table~\ref{tab:levelset-decomposition}
sit at the predicted residual scale, which clarifies the role of these
small coefficients. They are not noise to be discarded, nor are they the
primary signal; they are the codimension-$(n-d)$ residual of the intrinsic
factorization, which Theorem~\ref{thm:codim}(2) predicts must be present
whenever the attractor has codimension less than~1 (here, codim
$\approx 0.94$).

\subsection{Training produces nearly co-foliating channels (informal)}

Theorem~\ref{thm:intrinsic-fact} is a statement about whether a
factorization exists for a given $(f,g)$; it does not address why training
yields encoders whose latent coordinates admit such factorizations with
small residual. We sketch the connection without claiming a theorem.

The KAN encoder is trained to minimize a one-step prediction loss in the
linear latent dynamics $z_{t+\Delta t} = \exp(K\Delta t)z_t$. The
optimum aligns latent coordinates with eigendirections of the Koopman
operator $\mathcal{K}$ on $L^2(\mu)$: any $z_k$ minimizing the
prediction residual is, in the limit, a finite-dimensional projection of
$\mathcal{K}$-eigenfunctions. Eigenfunctions of $\mathcal{K}$ are
$\Phi_t$-equivariant, so their level sets form a flow-invariant
foliation of~$\mathcal{A}$. Two distinct eigenfunctions either share
this foliation (degenerate eigenvalue) or define transverse foliations
(distinct eigenvalues). The active channels of a single encoder
coordinate $z_k$ are coupled through the spline activation $\Phi_{k,j}$,
and the dominant mode within $z_k$ aligns with one Koopman eigendirection.
This is the dynamical reason intrinsic co-foliation holds approximately
in trained models.

\newpage
\section{Ablations}

\begin{table}[ht]
\centering
\begin{tabular}{@{}ccccc@{}}
\toprule
Lasso $\alpha$ & Jaccard $\uparrow$ & Precision $\uparrow$ & Recall $\uparrow$ & FP \\
\midrule
$10^{-6}$ & $\mathbf{0.83}$ & $\mathbf{0.83}$ & $\mathbf{1.00}$ & $1$ \\
$10^{-5}$ & $\mathbf{0.83}$ & $\mathbf{0.83}$ & $\mathbf{1.00}$ & $1$ \\
$5\!\times\!10^{-5}$ & $0.71$ & $0.71$ & $\mathbf{1.00}$ & $2$ \\
$10^{-4}$ & $0.50$ & $0.57$ & $0.80$ & $3$ \\
$5\!\times\!10^{-4}$ & $0.50$ & $0.57$ & $0.80$ & $3$ \\
$10^{-3}$ & $0.63$ & $0.63$ & $\mathbf{1.00}$ & $3$ \\
$5\!\times\!10^{-3}$ & $0.50$ & $0.57$ & $0.80$ & $3$ \\
$10^{-2}$ & $0.30$ & $0.38$ & $0.60$ & $5$ \\
\bottomrule
\end{tabular}
\caption{Lasso regularization sweep for the pruned Deep-Koopman-KANDy model (seed~0, prune threshold $0.03$). The Lasso parameter $\alpha$ controls sparsity of the level-set polynomial decomposition $z_k \approx h \circ g$. Tight regularization ($\alpha \leq 10^{-5}$) recovers all five target terms with one false positive; stronger regularization suppresses the cross-terms $xy$ and $xz$ before eliminating spurious monomials.}
\label{tab:lasso-sweep}
\end{table}

\begin{table}[ht]
\centering
\begin{tabular}{@{}ccccc@{}}
\toprule
Prune threshold & Edges pruned & MSE (retrained) & MSE ratio & Edges below $\tau$ \\
\midrule
none & $0/40$ & $2.60 \times 10^{-3}$ & $1.00\times$ & --- \\
$0.01$ & $\sim\!2/40\;(5\%)$ & $1.27 \times 10^{-3}$ & $0.49\times$ & $2\%$ \\
$0.03$ & $\sim\!7/40\;(18\%)$ & $2.55 \times 10^{-3}$ & $0.98\times$ & $14\%$ \\
$0.05$ & $\sim\!10/40\;(25\%)$ & $2.80 \times 10^{-3}$ & $1.07\times$ & $24\%$ \\
\bottomrule
\end{tabular}
\caption{Prune threshold sweep for the PyKAN encoder (architecture $[3,5,5]$, 40 total edges). Edges with PyKAN attribution score below the threshold are zeroed, then the model is retrained for 100 epochs. MSE ratio is relative to the unpruned baseline. The rightmost column shows the fraction of edges below $\tau$ across 12 independent models (3 data seeds $\times$ 4 model seeds).}
\label{tab:prune-sweep}
\end{table}
  
\subsection{Synthetic Validation of Outer Function Recovery}
\label{sec:synthetic}

To validate the gradient-based outer function recovery with Eq.~\eqref{eq:hprime} independently of model training, we construct synthetic test cases with known decompositions $f(\x) = h(g(\x))$ and verify that the method recovers both~$g$ and~$h$.

Data consists of $N = 50{,}000$ points drawn uniformly from
$[-20,20] \times [-30,30] \times [0,50]$.
Four test cases are defined:
\begin{enumerate}
\item $f = \cos(x/20 + y/40)$, with true $g = x/20 + y/40$ (linear)
  and $h = \cos$ (trigonometric).
\item $f = \exp(-(x^2 + y^2)/200)$, with true
  $g = -(x^2+y^2)/200$ (quadratic) and $h = \exp$ (exponential).
\item $f = \tanh(z/10 - 2)$, with true $g = z/10 - 2$ (linear)
  and $h = \tanh$ (sigmoidal).
\item $f = (xy/100)^3$, with true $g = xy/100$ (bilinear)
  and $h(\cdot) = (\cdot)^3$ (cubic).
\end{enumerate}
For each case, $\nabla f$ is computed analytically, the Lasso recovers $g$ with degree-3 polynomial dictionary and $\lambda = 10^{-5}$, and the gradient in Eq.~\eqref{eq:hprime} recovers~$h$.

\begin{table}[ht]
\centering
\begin{tabular}{@{}llcccl@{}}
\toprule
Test case & True $h$ & $R^2(g)$ & $R^2(h\!\circ\!g)$ & Gain & $h$ type \\
\midrule
$\cos(\text{linear})$ & $\cos$ & 0.996 & \textbf{0.999} & +0.003 & polynomial \\
$\exp(\text{quadratic})$ & $\exp$ & 0.718 & \textbf{0.976} & +0.258 & polynomial \\
$\tanh(\text{linear})$ & $\tanh$ & $-11.5$ & \textbf{0.997} & +12.5 & polynomial \\
$(\cdot)^3\!(\text{bilinear})$ & cube & 0.702 & \textbf{0.996} & +0.294 & polynomial \\
\bottomrule
\end{tabular}
\caption{Synthetic outer function recovery. $R^2(g)$ measures the polynomial inner function alone; $R^2(h \circ g)$ measures the full reconstruction.}
\label{tab:synthetic}
\end{table}

In all four cases, the gradient formula \eqref{eq:hprime} accurately recovers the outer function, achieving $R^2(h \circ g) > 0.97$. The $\tanh$ case is the most dramatic: the polynomial $g$ alone  yields $R^2 = -11.5$ (catastrophically bad), but the recovered $h$  corrects this to $R^2 = 0.997$. This demonstrates that the method handles cases where the polynomial dictionary is fundamentally insufficient. The $\cos$ case has the smallest gain because $\cos$ varies slowly over the data range (less than one full cycle), so  degree-2 polynomial already approximates it well ($R^2 = 0.996$).  The recovered $h'(g)$ curves match the true derivatives $1 - \tanh^2$ for tanh, $e^g$ for exp, $3g^2$ for cube, confirming the gradient formula provides genuine functional recovery (Table~\ref{tab:synthetic}).

\end{document}